\DeclareMathOperator*{\argmin}{arg\,min}
\newtheorem{theorem}{Theorem}
\newtheorem{lemma}[theorem]{Lemma}
\newtheorem{proposition}[theorem]{Proposition}
\newtheorem{corollary}[theorem]{Corollary}
\theoremstyle{definition}
\newtheorem{definition}[theorem]{Definition}
\newtheorem{remark}{Remark}
\newtheorem{conjecture}[theorem]{Conjecture}
\newcommand{\im}{\mathrm{im}\,}
\newcommand{\MT}{\mathrm{MT}}
\newcommand{\LCA}{\mathrm{LCA}}
\newcommand{\LMT}{\mathrm{LMT}}
\newcommand{\BC}{\mathcal{B}}
\renewcommand{\epsilon}{\varepsilon}
\title{Intrinsic Bottleneck Distance for Merge Trees}
\author{David Beers\thanks{Department of Mathematics, University of California, Los Angeles}\; and Gillian Grindstaff\thanks{Mathematical Institute, University of Oxford}}
\begin{document}
\maketitle

\begin{abstract}
Merge trees are a topological descriptor of a filtered space that enriches the degree zero barcode with its merge structure. The space of merge trees comes equipped with an interleaving distance $d_I$, which prompts the natural question: is the interleaving distance between two merge trees equal to the bottleneck distance between their corresponding barcodes? As the map from merge trees to barcodes is not injective, the answer as posed is no, but (as conjectured in Gasparovic et al.) we prove that it is true for the \emph{intrinsic} metrics $\widehat{d}_I$ and $\widehat{d}_B$ realized by infinitesimal path length in merge tree space. This implies that they have the same induced length space, and that in cases where a path is known, the bottleneck distance (which can be computed quickly) can be substituted for the interleaving distance (in general, NP-hard to approximate).
\end{abstract}

\section{Introduction}

One central topic in applied topology is the multiscale study of shape. Beginning with a family of nested topological spaces $\{X_t\}_{t\in \mathbb{R}}$, the \emph{dimension-0 persistence barcode}, or simply the \emph{barcode}, captures the path components of the family as $t$ evolves. The barcode is a multiset of intervals $[t_b,t_d)$ on which individual components ``persist": the \emph{birth} value $t_b$ at which the component first appears, and the \emph{death} value $t_d$ at which it merges with an older component \cite{zomorodian2004computing}. From this summary, the number of path components in $X_t$ can be recovered as the number of intervals containing $t$.

Crucially, when merging of path components occurs, the barcode marks the death of the later-born component, but retains no information about the component it has merged with. When this information is retained, the components of $\{X_t\}$ can be summarized by a sharper descriptor called a \emph{merge tree}. More precisely defined in Section \ref{sec:background}, a merge tree is a tree-like shape obtained from the barcode assigned to $\{X_t\}_{t\in\mathbb{R}}$ by attaching intervals corresponding to path components at the moment they are merged. The interplay between dimension-0 persistence barcodes and merge trees has been explored extensively \cite{curry2018fiber,TreesBarcodes,curry2024trees}, and used in applications such as neuronal morphology \cite{TMD, li2017metrics, kanari2019objective, beers2023barcodes}.

Barcodes come equipped with a metric\footnote{Technically speaking, the bottleneck distance is an extended pseudometric. On the kinds of barcodes that appear in this paper, those in which each interval has a closed left endpoint and open right endpoint and exactly one (right) endpoint among all intervals is infinite, it is a metric.} $d_B$, called the \emph{bottleneck distance}. Let $(\BC, d_B)$ denote the space of barcodes equipped with the bottleneck distance. The consistency of barcodes in analyzing real data sets depends on foundational stability results; for certain domains of data (including point clouds in $\mathbb{R}^n$ under Gromov-Hausdorff distance and height maps under $\ell_\infty$ distance), distance-based maps to $\BC$ are Lipschitz \cite{CohenSteiner}. Beyond stability, the geometry itself is complex and not fully understood - among other properties, $\BC$ is infinite-dimensional and non-Riemannian, with multiple Fr\'{e}chet means arising from regions of positive curvature \cite{Che2024}. Nevertheless, the distance between two points can be computed efficiently \cite{kerber2017geometry}; it is largely through this metric, and its properties, that statistics can be built on persistence barcodes \cite{FrechetPDs}. Canonical algebraic and mass-transport distances are known to coincide between 1-parameter persistence modules and their barcodes \cite{bauer2014induced}, a key result enabling quick computation of these statistics.

A natural variation of the space of barcodes $\BC$ is the space of all merge trees (referred to throughout as $\MT$) with \emph{interleaving} distance $d_I$. Pointwise, $\MT$ differs from $\BC$ only by the additional combinatorial data of which components merge, so that the forgetful map detaching branches from trees following the elder rule (\cite{cai2021elder, curry2018fiber, edelsbrunner2010computational}, see also Figure \ref{fig:elderrule}) recovers the original barcode.
Through this forgetful map we may define the bottleneck distance $d_B$ on merge trees as well. We have good reason to hope for tractable geometry: in addition to its natural correspondence with the space of barcodes,
many metric spaces of trees have been defined and studied in other applications, such as phylogenetics and data structures. Although phylogenetic analysis focuses on labeled, fixed taxa, metrics based on Euclidean \cite{BHV}, tropical \cite{trop} and probabilistic metrics \cite{wald} can induce a geometry on unlabeled trees as quotients of labeled tree space (as outlined in  \cite{feragen_nye_2020}). Additionally, when one restricts to rooted trees, the \emph{cophenetic} metric uses the height of merge events/least common ancestor to encode point pairs in a correlation matrix, and then takes an $L^p$ norm on the matrices. Munch and Stefanou \cite{munch2019} have observed that when $p = \infty$, this metric is equal to a variant of the interleaving distance for labeled merge trees. Gasparovic et al. \cite{gasparovic2025intrinsic} show that the \emph{intrinsic} interleaving distance $\widehat{d}_I$, as defined by infinitesimal path length in $\MT$, coincides with $d_I$ on the space of merge trees (stated here as Theorem \ref{thm:dI}). That is, every interleaving distance on merge trees can be realized by a geodesic. This is critical for defining Fr\'{e}chet means and further statistics.

 Morozov et al. \cite{morozov2013interleaving} established that $d_I$ dominates $d_B$ in the merge-tree setting, meaning $d_B \leq d_I$. Critically, strict equality does not hold, as $d_B$ is a pseudometric on merge trees that cannot distinguish between different trees associated to the same barcode (e.g. Fig~\ref{fig:mergetreepath}), the multiplicity and combinatorics of which vary \cite{curry2024trees,beers2025fiber}. However, each fiber of the forgetful map from $\MT$ to $\BC$ is a finite isolated set of points in the interleaving metric topology \cite{curry2018fiber}, which implies that the intrinsic bottleneck distance $\widehat{d}_B:\MT\times\MT \to \mathbb{R}_{\geq 0}$ using paths that are continuous (with respect to $d_I$) may be positive where the $d_B$ pseudometric is 0.
In their conclusion, Gasparovic et al. mention (informally) a conjecture: that on the space $\MT$, the \emph{intrinsic} versions of bottleneck and interleaving distances actually fully agree. 

In Theorem \ref{thm:conjecture} we prove this conjecture:
\[\widehat{d}_B = \widehat{d}_I\,,\]
which means that as length spaces, there is an isometry $(\MT, \widehat{d}_B) \cong (\MT, d_I)$.
We also establish, in Section \ref{sec:geodesics}, that geodesics realizing these intrinsic distances exist: for any two merge trees there is a path between them whose $d_B$-length and $d_I$-length both equal their interleaving distance (Theorem \ref{thm:geodesics}).  Together these results extend some of the computational advantages of bottleneck distance from the space of barcodes to merge trees with known geodesics, or general continuous paths of merge trees, and elucidate the relationship between the two geometries. We note that paths of merge trees have been recently used to model time-varying terrain maps \cite{beurskens2025ordered, yan2022geometry} - if the time resolution is fine enough, our result gives a strong justification for using bottleneck distance to compute path length. 

\section{Background}\label{sec:background}

\subsection{Merge Trees}

A \emph{tree} is a finite acyclic graph. A \emph{geometric tree} is any topological space $X$ obtained from a tree $G$ by viewing each edge of $G$ as a copy of the unit interval $[0,1]$ and identifying endpoints corresponding to the same vertex of $G$. Branch points (resp. leaves) in $X$ are the points in $X$ corresponding to branch points (resp. leaves) under this identification.

\begin{definition}
\label{def:mt}
    A \emph{merge tree} $(T,f)$ is a pair consisting of
    \begin{enumerate}
        \item A topological space $T = X\sqcup[0,1)/\sim$, where $X$ is a geometric tree and $\sim$ is the relation $x_0\sim 0$, for some particular $x_0\in X$ (called the \emph{root}).
        \item A continuous function $f: T \to \mathbb{R}$ such that:
            \begin{itemize}
                \item $f$ is strictly increasing on its restriction to the copy of $[0,1)$ in the construction $T = X \sqcup[0,1)/\sim$ and $f(y) \to \infty$ as $y \to 1$ on this interval; and
                \item $f$ is strictly increasing on any injective path $\gamma: [0,1] \to X$ with $\gamma(1) = x_0$.
            \end{itemize}
    \end{enumerate}
    For $u,v \in T$ we write $u \preceq v$ if there is a path from $u$ to $v$ that is strictly increasing in $f$, or if $u=v$. In this case we say $v$ is an ancestor of $u$ or $u$ is a descendant of $v$. The image of a leaf of $X$ in $T$ is called a leaf of $T$, provided it is not identified to $0\in[0,1)$. The image of a branch point, or the image of $0$ provided it is not identified with a leaf, is called a branch point of $T$. If $S$ is the set of leaves and branch points of $T$, then the path components of $T-S$ are the edges of $T$.

    The least common ancestor of two leaves $x,y\in T$, denoted $\LCA(x,y)$, is defined to be the unique point $z\in T$, such that $x,y\preceq z$, and if $x,y \preceq z'$, then $z \preceq z'$. It can be shown that if $x$ and $y$ are distinct leaves of $T$, then $\LCA(x,y)$ is a branch point. Similarly, if $V$ is a finite set of points in $T$, then $\LCA(V)$ is defined as the unique point $z \in T$ such that $x \preceq z$ for all $x \in V$, but for any $z'$ such that $x \preceq z'$ for all $x \in V$, we have $z \preceq z'$.

    Two merge trees $(T,f)$ and $(T',f')$ are called \emph{isomorphic} if there is a homeomorphism $\phi: T \to T'$ such that $f = f'\circ \phi$.
\end{definition}

Since, by definition, a geometric tree has only finitely many leaves, branch points, and edges, the same is true for merge trees.

An example merge tree is illustrated in Figure \ref{fig:mergetreeex}.

\begin{figure}[htbp]
    \centering
    \includegraphics[height=2.5in]{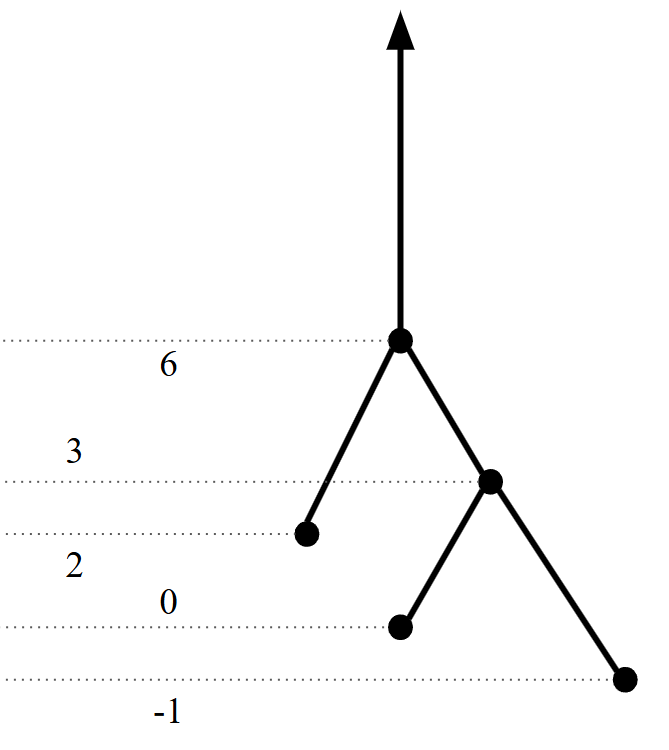}
    \caption{A merge tree plotted by height $f$, with $f$-values indicated at vertices.}
    \label{fig:mergetreeex}
\end{figure}

\begin{remark}
    Sometimes in the literature, what we call in this paper a merge tree is called a \emph{cellular} merge tree (e.g. in \cite{curry2018fiber}) to distinguish from more general constructions considered in \cite{morozov2013interleaving}, for example. We say \emph{merge trees} in this paper instead of \emph{cellular merge trees} in order to remain consistent with the terminology used in \cite{gasparovic2025intrinsic}.
\end{remark}

Given $\epsilon > 0$ and $x\in T$, one can show there is a path $\gamma:[0,1] \to T$ such that $\gamma(0) = x$, $f(\gamma(1)) = f(x) + \epsilon$, $f\circ \gamma$ strictly increasing, and $\gamma$ is unique up to reparameterization. As such, we may define $i^\epsilon(x)$ to be the point $\gamma(1)$ in $T$. We remark here that $i^\epsilon \circ i^\delta = i^{\epsilon + \delta}$.

\begin{definition}
    Given merge trees $(T, f)$ and $(T',f')$, an \emph{$\epsilon$-interleaving} between $(T,f)$ and $(T',f')$ is a pair of continuous maps, $\alpha:T\to T'$, $\beta:T' \to T$ satisfying that for all $x\in T$ and all $x'\in T'$
    \begin{align*}
        f'\circ \alpha(x) = f(x) + \epsilon & \qquad f \circ \beta(x') = f'(x') + \epsilon \\
        \beta \circ \alpha(x) = i^{2\epsilon}(x) & \qquad \alpha\circ\beta(x') = i^{2\epsilon}(x').
    \end{align*}
    The \emph{interleaving distance} between $(T,f)$ and $(T',f')$, denoted $d_I\big((T,f),(T',f')\big)$, is the infimum of values $\epsilon$ such that $(T,f)$ and $(T',f')$ are $\epsilon$-interleaved.
\end{definition}

As remarked in \cite{morozov2013interleaving} it follows from the definition of an interleaving $(\alpha,\beta)$ that
\begin{equation*}
    i^\epsilon \circ\alpha = \alpha \circ i^\epsilon \qquad i^\epsilon \circ\beta = \beta \circ i^\epsilon \qquad \text{for all }\epsilon.
\end{equation*}
The following is known (\cite[Theorem 9]{beurskens2025locally}, see also \cite{pegoraro2025graphmatching}); for completeness, we give a short self-contained proof via compactness that produces the interleaving pair $\alpha, \beta$ directly. As we shall show, Lemma \ref{lem:infint} can be used to show that the interleaving distance is a true metric (Lemma \ref{lem:intmetric}), a result previously shown by other means \cite{cardona2022universal}. Other than its application in the proof of Lemma \ref{lem:intmetric}, we only use Lemma \ref{lem:infint} in Section \ref{sec:geodesics}.

\begin{lemma}
    \label{lem:infint}
    Suppose $d_I\big((T,f),(T',f')\big) = \epsilon$. Then $(T,f)$ and $(T',f')$ are $\epsilon$-interleaved. In other words, the infimum in the definition of $d_I$ is actually a minimum, and maps satisfying the $\epsilon$-interleaving conditions can be specified.
\end{lemma}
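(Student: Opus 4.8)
The plan is to realize the minimizing interleaving as a limit of $\epsilon_n$-interleavings for a sequence $\epsilon_n \downarrow \epsilon$, using a compactness argument. Since $d_I\big((T,f),(T',f')\big) = \epsilon$, for each $n$ there is an $\epsilon_n$-interleaving $(\alpha_n, \beta_n)$ with $\epsilon_n \to \epsilon$; after passing to a subsequence we may assume $\epsilon_n$ is monotone decreasing. The maps $\alpha_n$ are not equicontinuous a priori, but they are height-constrained: $f' \circ \alpha_n(x) = f(x) + \epsilon_n$, so each $\alpha_n$ maps the compact sublevel set $f^{-1}(-\infty, c]$ into the compact sublevel set $f'^{-1}(-\infty, c + \epsilon_1]$. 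The key structural fact to exploit is that a continuous height-preserving-up-to-shift map between merge trees is highly rigid: on any level set it is locally constant in the combinatorial sense, and its behavior is determined by where it sends the (finitely many) leaves together with monotonicity. I would first record this rigidity precisely, perhaps by showing that $\alpha_n$ is determined by the induced map on leaves of $T$ (or on a finite set of sample points) via the formula $\alpha_n(x) = i^{\delta}\big(\text{something}\big)$ interpolating along edges.

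The main technical step is then a pigeonhole/stabilization argument. Because $T$ has only finitely many leaves and edges, and similarly $T'$, there are only finitely many "combinatorial types" of maps $T \to T'$ respecting the tree order and shifting height by a bounded amount — the data of which edge of $T'$ each edge of $T$ maps into. Hence infinitely many of the $\alpha_n$ share the same combinatorial type; pass to that subsequence. Within a fixed combinatorial type, each $\alpha_n$ is pinned down by a finite vector of real parameters (the heights at which leaves of $T$ are sent, or equivalently the images of branch points), all lying in compact intervals. Passing to a further subsequence, these parameters converge, and the limit parameters define a candidate map $\alpha: T \to T'$ with $f' \circ \alpha(x) = f(x) + \epsilon$; do the same for $\beta_n \to \beta$, taking a common subsequence. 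I would then argue continuity of $\alpha$ and $\beta$ from the explicit interpolation formula, and verify pointwise convergence $\alpha_n \to \alpha$, $\beta_n \to \beta$ (uniformly on compact sublevel sets, which is all one needs here).

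Finally I would check that the limit pair $(\alpha, \beta)$ satisfies the $\epsilon$-interleaving identities. The shift conditions $f' \circ \alpha = f + \epsilon$ and $f \circ \beta = f' + \epsilon$ pass to the limit immediately. For the composition conditions $\beta \circ \alpha = i^{2\epsilon}$ and $\alpha \circ \beta = i^{2\epsilon}$, fix $x \in T$: we have $\beta_n(\alpha_n(x)) = i^{2\epsilon_n}(x)$ for all $n$, and I would argue that $\beta_n(\alpha_n(x)) \to \beta(\alpha(x))$ (using convergence of $\alpha_n$ at $x$, convergence of $\beta_n$ on a compact set containing all the points $\alpha_n(x)$, plus a continuity estimate to handle that $\alpha_n(x)$ is itself moving) and that $i^{2\epsilon_n}(x) \to i^{2\epsilon}(x)$ (which follows from the semigroup relation $i^\epsilon \circ i^\delta = i^{\epsilon+\delta}$ and continuity of $i^\delta$ in $\delta$, with $\delta = 2\epsilon_n - 2\epsilon \downarrow 0$). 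Equating the two limits gives the identity at $\epsilon$.

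The hard part will be the joint-convergence step $\beta_n(\alpha_n(x)) \to \beta(\alpha(x))$: this is a "diagonal" continuity claim where both the map and its argument vary, so it is not immediate from pointwise convergence of $\beta_n$ alone. I expect this to be handled cleanly by the edgewise interpolation description — once $\alpha_n$ and $\beta_n$ are known to be affine-in-height on each combinatorial edge with converging slopes and intercepts, the composition $\beta_n \circ \alpha_n$ is piecewise-affine with converging data, and convergence of the evaluation is a routine estimate. An alternative, possibly slicker route that avoids explicit parametrization entirely: invoke Arzel\`a--Ascoli after establishing that any $\epsilon'$-interleaving map with $\epsilon' \le \epsilon_1$ is automatically $1$-Lipschitz with respect to a suitable intrinsic metric on $T$ and $T'$ (merge trees with the path-length metric are compact length spaces on each sublevel set), extract uniformly convergent subsequences of $\alpha_n$ and $\beta_n$ directly, and then pass to the limit in all four identities as above. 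I would pursue whichever of these keeps the bookkeeping lightest, likely the Arzel\`a--Ascoli version, falling back on the combinatorial argument if the Lipschitz bound proves awkward to state.
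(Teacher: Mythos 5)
Your plan follows the paper's in outline: pass to a monotone subsequence $\epsilon_k \downarrow \epsilon$, exploit compactness of sublevel sets so that the finitely many leaf images $\alpha_k(l_i), \beta_k(l'_j)$ converge along a further subsequence, define limit maps $\alpha, \beta$ on all of $T$ and $T'$ via the commutation $\alpha \circ i^t = i^t \circ \alpha$, and push the interleaving identities to the limit. Two points are worth noting. First, the pigeonhole over combinatorial types of maps $T \to T'$ is superfluous: once you know each $\alpha_k$ is determined by its leaf images together with the relation $\alpha_k \circ i^t = i^t \circ \alpha_k$, you can pass directly to a subsequence along which the leaf-image tuple converges in a compact product of sublevel sets; there is no need to first stabilize a combinatorial type. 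Second, and more substantively, you correctly identify the diagonal-convergence step $\beta_n(\alpha_n(x)) \to \beta(\alpha(x))$ as the real difficulty, and you sketch two plausible routes (piecewise-affine parametrization, or Arzel\`a--Ascoli after a $1$-Lipschitz bound in the path metric). The paper sidesteps both with a cleaner device: after one more subsequence one can arrange $\alpha(l_i) \preceq \alpha_k(l_i)$ for every $i$ and $k$ (using the local tree structure near each $\alpha(l_i)$), and then the commutation with $i^t$ forces the exact relation $\alpha_k = i^{\delta_k} \circ \alpha$ with $\delta_k = \epsilon_k - \epsilon$, and similarly $\beta_k = i^{\delta_k} \circ \beta$. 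From this one reads off the algebraic identity $\alpha_j \circ \beta_k = i^{\epsilon_j + \epsilon_k}$ for all $j, k$, and $\alpha \circ \beta = i^{2\epsilon}$ follows as a clean iterated limit with no uniform-convergence or Lipschitz estimates. Your alternatives would likely also close the gap (interleaving maps really are $1$-Lipschitz for the path metric, and one can exhaust $T$ by compact sublevel sets to apply Arzel\`a--Ascoli), but at the price of extra bookkeeping; the paper's order-theoretic observation collapses the hard step to an identity.
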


\begin{proof}
    If $d_I\big((T,f),(T',f')\big) = \epsilon$, then there exists a sequence $\{(\alpha_k,\beta_k)\}_{k\in \mathbb{N}}$ such that $\alpha_k:T\to T', \beta_k:T'\to T$ define an $\epsilon_k$ interleaving and $\epsilon_k \to \epsilon$. By potentially taking a subsequence we may assume the values $\epsilon_k$ are weakly decreasing. We let $E = \sup\epsilon_k$.

    Let $l_1,\ldots,l_n$ denote the leaves of $T$ and $l'_1,\ldots,l'_m$ denote the leaves of $T'$. Let
    \begin{equation*}
        A = \max\big(\max_{1\leq i\leq n} f(l_i), \max_{1 \leq i\leq m}f'(l'_i)\big) \qquad X = f^{-1}(-\infty,A+E] \qquad X' = (f')^{-1}(-\infty, A+E].
    \end{equation*}
    Then $X$ and $X'$ are compact. We have a sequence indexed by $k$ given by
    \begin{equation*}
        \big(\alpha_k(l_1), \ldots\alpha_k(l_n), \beta_k(l'_1),\ldots,\beta_k(l'_m)\big) \in (X')^n\times X^m.
    \end{equation*}
    Being a sequence in a compact metric space, it has a convergent subsequence. By taking such a convergent subsequence, we may assume the above sequence converges. We refer to the element this sequence converges to as
    \begin{equation*}
         \big(\alpha(l_1), \ldots\alpha(l_n), \beta(l'_1),\ldots,\beta(l'_m)\big).
    \end{equation*}
    It follows that $f'(\alpha(l_i)) = f(l_i) + \epsilon$, $f(\beta(l'_i)) = f'(l'_i) + \epsilon$. This implies $f'(\alpha(l_i)) \leq f'(\alpha_k(l_i))$ for all $k$. For any $x \in T'$, one may find a neighborhood $U_x$ such that if $y \in U_x$ and $f'(x) \leq f'(y)$, then $x \preceq y$. Consequently, by taking a subsequence, we may assume $\alpha(l_i) \preceq \alpha_k(l_i)$ for all $i$ and $k$. By taking another subsequence, a similar argument shows we may assume $\beta(l'_i) \preceq \beta_k(l'_i)$ for all $i$ and $k$.

    For all $x\in T$, we define $\alpha(x) = \lim_{k \to \infty} \alpha_k(x)$, noting that this agrees with our previous definition of $\alpha(l_i)$. Indeed this limit exists since if $x = i^t(l_i)$ we observe, using continuity of $i^t$, that
    \begin{equation}
        \label{eqn:alphadef}
        \alpha(x) = \lim_{k \to \infty} \alpha_k(x) =\lim_{k\to \infty} \alpha_k i^t(l_i) = \lim_{k\to \infty} i^t \alpha_k(l_i) = i^t\lim_{k\to \infty} \alpha_k(l_i) = i^t\alpha(l_i).
    \end{equation}
    If $\gamma_i:[0,1) \to T$ is a path with $\gamma_i(0) = l_i$ along which $f$ is increasing to infinity, the above equation shows that $\alpha$ restricted to the image of $\gamma_i$ is continuous, which implies that $\alpha$ is continuous.
    Similarly we may define $\beta:T' \to T$, and observe that $\beta$ is continuous.
    We have
    \begin{align*}
        i^t \alpha(x) = i^t\lim_{k \to \infty}\alpha_k(x) = \lim_{k \to \infty}i^{t}\alpha_k(x) = \lim_{k \to \infty} \alpha_ki^t(x) = \alpha i^t(x)\\
        f' \alpha(x) = f'\lim_{k \to \infty}\alpha_k(x) = \lim_{k \to \infty}f'\alpha_k(x) = \lim_{k \to \infty} f(x) + \epsilon_k = f(x) + \epsilon.
    \end{align*}
    Similarly $i^t\beta(x) = \beta i^t(x)$ and $f\beta(x) = f'(x) + \epsilon$ for all $x \in T'$.
    Let $\delta_k = \epsilon_k - \epsilon$. Since $\alpha(l_i) \preceq \alpha_k(l_i)$, Equation (\ref{eqn:alphadef}) implies that $\alpha(x) \preceq \alpha_k(x)$ for all $x\in T$. Consequently we have $\alpha_k(x) = i^{\delta_k}\alpha(x)$. Similarly, for $x\in T'$, $\beta_k(x) = i^{\delta_k}\beta(x)$. Therefore, if $j \geq k$,
    \begin{equation*}
        \alpha_j\beta_k(x) = i^{\delta_j}\alpha \beta_k(x) = i^{\delta_j - \delta_k}\alpha_k\beta_k(x) = i^{\delta_j - \delta_k}i^{2\epsilon_k}(x) = i^{\epsilon_j +\epsilon_k}(x).
    \end{equation*}
    If instead $k \geq j$,
    \begin{equation*}
        \alpha_j\beta_k(x) = \alpha_ji^{\delta_k}\beta(x) = \alpha_ji^{\delta_k-\delta_j}\beta_j(x)= i^{\delta_k-\delta_j}\alpha_j\beta_j(x) = i^{\delta_k - \delta_j}i^{2\epsilon_j}(x) = i^{\epsilon_j + \epsilon_k}(x),
    \end{equation*}
    So $\alpha_j \circ \beta_k = i^{\epsilon_j + \epsilon_k}$. Therefore,
    \begin{equation*}
        \alpha\beta(x) = \lim_{j\to\infty}\alpha_j \lim_{k\to \infty} \beta_k(x) = \lim_{j\to \infty}\lim_{k \to \infty} \alpha_j\beta_k(x) = \lim_{j\to \infty}\lim_{k \to \infty} i^{\epsilon_j + \epsilon_k}(x) = i^{2\epsilon}(x).
    \end{equation*}
    So $\alpha \circ \beta = i^{2\epsilon}$. Similarly it is shown that $\beta \circ \alpha = i^{2\epsilon}$, completing the proof.
\end{proof}

The result below is proven in \cite{cardona2022universal} (specifically, it follows immediately from Theorem 1.4 and Lemma 4.4 therein). Since it follows without much difficulty from Lemma \ref{lem:infint} we provide an alternate proof here.

\begin{lemma}
    \label{lem:intmetric}
    The interleaving distance is a metric on merge trees (up to isomorphism).
\end{lemma}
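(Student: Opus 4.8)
The plan is to check the metric axioms one at a time; non-negativity, symmetry, and the triangle inequality are essentially formal, and the only substantive point is the separation axiom, which is exactly where Lemma \ref{lem:infint} does the work.

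Non-negativity is immediate since $d_I$ is an infimum of nonnegative reals, and $d_I$ is symmetric because $(\alpha,\beta)$ is an $\epsilon$-interleaving between $(T,f)$ and $(T',f')$ precisely when $(\beta,\alpha)$ is an $\epsilon$-interleaving between $(T',f')$ and $(T,f)$. That $d_I$ never equals $+\infty$ (so that it is a genuine metric rather than an extended one) follows from a direct construction: choose $R$ large enough that $f^{-1}([R,\infty))$ and $(f')^{-1}([R,\infty))$ are each a single terminal ray, and then for all sufficiently large $\epsilon$ the maps $\alpha,\beta$ sending a point of height $h$ to the point of height $h+\epsilon$ on the corresponding terminal ray satisfy the $\epsilon$-interleaving conditions (the height conditions hold by construction, and $\beta\circ\alpha$ and $\alpha\circ\beta$ agree with $i^{2\epsilon}$ once $2\epsilon$ pushes every point above $R$).

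For the triangle inequality I would invoke the composition of interleavings, \cite[Lemma 1]{morozov2013interleaving}: if $(\alpha_1,\beta_1)$ is an $\epsilon_1$-interleaving between $(T_1,f_1)$ and $(T_2,f_2)$ and $(\alpha_2,\beta_2)$ is an $\epsilon_2$-interleaving between $(T_2,f_2)$ and $(T_3,f_3)$, then $(\alpha_2\circ\alpha_1,\ \beta_1\circ\beta_2)$ is an $(\epsilon_1+\epsilon_2)$-interleaving between $(T_1,f_1)$ and $(T_3,f_3)$: the height conditions add up directly, and the round-trip identities, e.g. $(\beta_1\circ\beta_2)\circ(\alpha_2\circ\alpha_1) = \beta_1\circ i^{2\epsilon_2}\circ\alpha_1 = i^{2\epsilon_2}\circ\beta_1\circ\alpha_1 = i^{2(\epsilon_1+\epsilon_2)}$, use $\beta_2\circ\alpha_2 = i^{2\epsilon_2}$ together with the commutation $i^t\circ\alpha_1 = \alpha_1\circ i^t$ (and likewise for $\beta_1$) noted just after the definition of interleaving. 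Taking infima over $\epsilon_1$ and $\epsilon_2$ gives $d_I(T_1,T_3)\le d_I(T_1,T_2)+d_I(T_2,T_3)$.

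Finally, suppose $d_I\big((T,f),(T',f')\big)=0$. By Lemma \ref{lem:infint} the infimum is attained, so there is an honest $0$-interleaving $(\alpha,\beta)$, whose defining conditions read $f'\circ\alpha = f$, $f\circ\beta = f'$, $\beta\circ\alpha = i^{0} = \mathrm{id}_T$, and $\alpha\circ\beta = i^{0} = \mathrm{id}_{T'}$. Thus $\alpha\colon T\to T'$ is a continuous bijection with continuous inverse $\beta$ and $f = f'\circ\alpha$, i.e.\ $\alpha$ is an isomorphism of merge trees; conversely an isomorphism visibly yields a $0$-interleaving, so $d_I = 0$ if and only if the two merge trees are isomorphic. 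The main obstacle is precisely the step encapsulated in Lemma \ref{lem:infint}: a vanishing infimum a priori only produces $\epsilon_k$-interleavings with $\epsilon_k\to 0$, and without the compactness/limiting argument of that lemma one cannot conclude that the round-trip maps are literally the identity. Given Lemma \ref{lem:infint}, the remaining verifications are routine.
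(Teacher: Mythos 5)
Your proof is correct and follows essentially the same approach as the paper: both arguments reduce finiteness to the observation that sufficiently high shifts land on the terminal ray, both invoke \cite[Lemma 1]{morozov2013interleaving} for the triangle inequality, and both treat the separation axiom as the crux, resolved by Lemma \ref{lem:infint} producing an honest $0$-interleaving and hence an isomorphism. You spell out the composition argument for the triangle inequality and symmetry in a bit more detail where the paper simply cites Morozov et al., but the substance is identical.
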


\begin{proof}
    From \cite[Lemma 1]{morozov2013interleaving} we know that $d_I$ satisfies the triangle inequality and symmetry; to verify that it is a metric, it remains to show that $d_I\big((T,f),(T',f')\big)<\infty$ and that $(T,f)=(T',f')$ if $d_I\big((T,f),(T',f')\big)= 0$.

    To handle the first issue, let $(T,f)$, $(T',f')$ be merge trees. We let $e$ (resp. $e'$) be the root edge, i.e. the unique edge of $T$ (resp. $T'$) which takes arbitrarily large $f$-values. We may define an interleaving between the two merge trees sending each point to either $e$ or $e'$ as follows. Let $r\in T, r'\in T'$ be the root nodes, and let $A = \max\{f(r),f'(r')\}$. By the merge tree definition, each $A^*>A$ corresponds to a unique value $x\in T$ in the root edge with $f(x) = A^*$, and similarly for $x'\in T'$. Now let $B=\inf(f,f')$ and define $\alpha(x):= (f')^{-1}(f(x) + A - B).$ Since $f(x)+A-B > A$, the image of $\alpha$ is contained in the root edge of $T'$, and so $(f')^{-1}(f(x) + A - B)$ is uniquely defined on $T$. Define $\beta$ analogously. Observe that $A$ and $B$ are finite, so that the $A-B$ interleaving gives a finite upper bound on $d_I(T,T')$.

    To handle the second issue, we suppose $d_I\big((T,f),(T',f')\big) = 0$. Lemma \ref{lem:infint} implies that there exists $\alpha:T\to T'$ and $\beta:T' \to T$ which form a $0$-interleaving. Hence $\alpha$ defines a merge tree isomorphism from $(T,f)$ to $(T',f')$ with inverse $\beta$.
\end{proof}

\begin{definition}
    We define $\MT$ to be the metric space of isomorphism classes of merge trees equipped with the interleaving distance. We define $\MT_n$ to be the metric subspace consisting of $\MT$ of merge trees with $n$ leaves or less, also equipped with the interleaving distance.
\end{definition}

\subsection{The Bottleneck Distance}

Given a merge tree $(T,f)$, there is a procedure called the elder rule which converts $(T,f)$ into a multiset of intervals. Suppose that $l_0,l_1,\ldots,l_n$ are the leaves of $T$, indexed such that if $f(l_i) < f(l_j)$ then $i<j$, and otherwise indexed arbitrarily. For $1 \leq i \leq n$, define $b_i$ to be the least ancestor of $l_i$ such that $b_i$ is also an ancestor of $l_j$ for some $j < i$. In the next section we prove $b_i$ is well defined, a branch point, and uniquely determined by this definition, although this is well known. One can show that moreover every branch point in $(T,f)$ is a point $b_i$ for some $i$. The elder rule defines
\begin{equation*}
    B(T,f) = \big[f(l_0),\infty \big) \sqcup \bigsqcup_{i = 1}^n \big[f(l_i), f(b_i)\big)\,.
\end{equation*}
The multiset of intervals $B(T,f)$ is known to be unaffected by changing the order of leaves with identical $f$-values, see Remark \ref{rmk:elderruleph} for further details. We illustrate the computation of $B(T,f)$ from a merge tree in Figure \ref{fig:elderrule}.

\begin{figure}[htbp]
    \centering
    \includegraphics[height=2.5in]{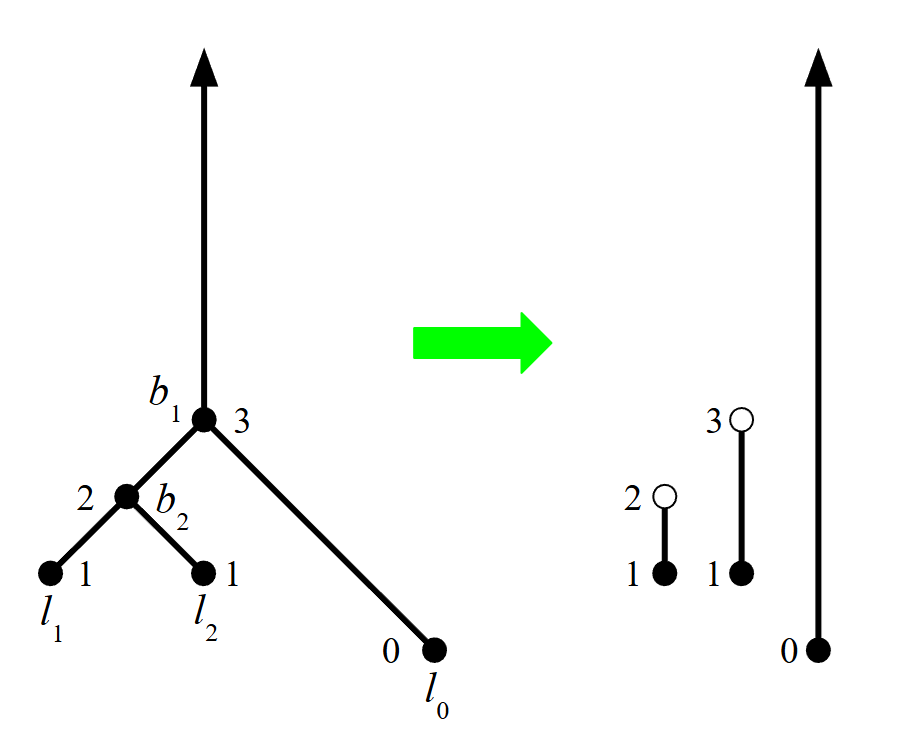}
    \caption{(Left) A merge tree $(T,f)$ with vertices labeled both as either a leaf $l_i$ or a branch point $b_i$ and with their $f$-value. (Right) The barcode $B(T,f)$ obtained from the merge tree $(T,f)$.}
    \label{fig:elderrule}
\end{figure}

Due to this construction, we make the following definition.

\begin{definition}
    A multiset of intervals in the real line is called a \emph{barcode}. We denote by $\BC$ the space of all barcodes.
\end{definition}

\begin{remark}
    \label{rmk:elderruleph}
    One reason the elder rule was originally precisely formulated is that the elder rule applied to $(T,f)$ computes the degree zero sublevel set persistent homology of the function $f$. Since the definition of persistent homology is rather algebraic and not strictly necessary for this paper, we refer the reader to \cite{cai2021elder,curry2018fiber,edelsbrunner2010computational} for the details of the connection between the elder rule and persistent homology. It is for this reason that the barcode $B(T,f)$ resulting from the elder rule is known to remain the same after reordering leaves with identical $f$-values.
\end{remark}

\begin{definition}
    A \emph{partial matching} between barcodes $B$ and $B'$ is an injective map $\phi$ from a sub-multiset $S$ of $B$ into $B'$. Given an interval $I$ let $L(I)$ and $R(I)$ denote its (potentially infinite) left and right endpoints. The \emph{cost} of $\phi$ is the maximum of the following three values.
    \begin{align*}
        &\sup_{I\in S} \max\big(|L(I)-L(\phi(I))|, |R(I) - R(\phi(I))|\big),\\
        &\sup_{I \in B-S} \frac{R(I)- L(I)}{2}, \\
        &\sup_{I \in B' - \im\phi} \frac{R(I) -L(I)}{2}.
    \end{align*}
    In the above expressions any difference $a-b$ is assumed to be infinite if either $a$ or $b$ is infinite, unless $a=b$, in which case we take $a-b = 0$.

    The \emph{bottleneck distance} between barcodes $B$ and $B'$ is defined as the infimum of the cost of all partial matchings between $B$ and $B'$.
\end{definition}

If both $B$ and $B'$ are finite multisets, it can be shown that the infimum defining the bottleneck distance between $B$ and $B'$ is attained by some partial matching (see e.g. \cite{kerber2017geometry}). It is well known that the bottleneck distance is an extended pseudometric, i.e. it satisfies all of the axioms of a metric, except that the bottleneck distance between barcodes $B$ and $B'$ may be infinite, and may be equal to zero even if $B \neq B'$.

\begin{definition}
     We define the \emph{bottleneck distance} between merge trees $(T,f)$ and $(T',f')$, denoted $d_B\big((T,f),(T',f')\big)$, to be the bottleneck distance between $B(T,f)$ and $B(T',f')$, the barcodes obtained from $(T,f)$ and $(T',f')$ respectively via the elder rule.
\end{definition}

The following result relating the bottleneck and interleaving distances for merge trees comes from \cite[Theorem 3]{morozov2013interleaving}.

\begin{theorem}
    \label{thm:bottlelessthaninterleaving}
    On the space $\MT$, $d_B \leq d_I$.
\end{theorem}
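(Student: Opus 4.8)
The plan is to lift an $\epsilon$-interleaving of merge trees to an $\epsilon$-interleaving of the associated sublevel-set persistence modules and then invoke the Isometry Theorem. By Lemma \ref{lem:mtpfd}, the modules $V = \{H_0(f^{-1}(-\infty,t])\}_{t\in\mathbb R}$ and $W = \{H_0((f')^{-1}(-\infty,t])\}_{t\in\mathbb R}$ are pointwise finite-dimensional, so the Isometry Theorem identifies $d_B\big((T,f),(T',f')\big)$ with $\inf\{\epsilon\ge 0 : V \text{ and } W \text{ are }\epsilon\text{-interleaved}\}$. Hence it suffices to show that if $(T,f)$ and $(T',f')$ admit an $\epsilon$-interleaving of merge trees, then $V$ and $W$ admit an $\epsilon$-interleaving of persistence modules.

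First I would fix an $\epsilon$-interleaving $(\alpha,\beta)$ and observe that, since $f'\circ\alpha = f+\epsilon$, the global map $\alpha$ restricts and corestricts to a continuous map $\alpha_t : f^{-1}(-\infty,t] \to (f')^{-1}(-\infty,t+\epsilon]$ for every $t$, and that these restrictions are compatible with the sublevel-set inclusions precisely because they come from one global map. Setting $\phi_t := H_0(\alpha_t) : V_t \to W_{t+\epsilon}$ and defining $\psi_t := H_0(\beta_t)$ analogously from $\beta$, functoriality of $H_0$ applied to the commuting squares of inclusions immediately yields the two "commuting-square" interleaving axioms, $\phi_t\circ V_{s,t} = W_{s+\epsilon,t+\epsilon}\circ\phi_s$ and $\psi_s\circ W_{s,t} = V_{s+\epsilon,t+\epsilon}\circ\psi_t$ for $s\le t$.

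It then remains to check the two composition identities, $\psi_{t+\epsilon}\circ\phi_t = V_{t,t+2\epsilon}$ and $\phi_{t+\epsilon}\circ\psi_t = W_{t,t+2\epsilon}$. Since $\beta\circ\alpha = i^{2\epsilon}$, functoriality gives $\psi_{t+\epsilon}\circ\phi_t = H_0\big(i^{2\epsilon}|_{f^{-1}(-\infty,t]}\big)$ as a map into $H_0(f^{-1}(-\infty,t+2\epsilon])$, and the step I expect to require the most care is verifying that this equals the inclusion-induced structure map $V_{t,t+2\epsilon}$. The point is that for any $x\in f^{-1}(-\infty,t]$ the canonical increasing path from $x$ to $i^{2\epsilon}(x)$ has $f$-values bounded by $f(i^{2\epsilon}(x)) = f(x)+2\epsilon \le t+2\epsilon$, so it lies entirely in $f^{-1}(-\infty,t+2\epsilon]$; thus on each path component $C$ of $f^{-1}(-\infty,t]$ the connected set $i^{2\epsilon}(C)$ and the point $x\in C$ sit in the same path component of $f^{-1}(-\infty,t+2\epsilon]$, so $i^{2\epsilon}$ and the inclusion induce the same map on $H_0$. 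The symmetric computation handles $\psi$, so $(\phi,\psi)$ is an $\epsilon$-interleaving of $V$ and $W$, giving $d_B \le \epsilon$ whenever an $\epsilon$-interleaving of merge trees exists; taking the infimum over such $\epsilon$ (or applying Lemma \ref{lem:infint} to the minimal interleaving) yields $d_B \le d_I$ on $\MT$.
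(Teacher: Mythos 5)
Your proof is correct. The paper itself does not prove this statement; it cites it from \cite[Theorem~3]{morozov2013interleaving}, and your argument --- pushing the merge-tree interleaving $(\alpha,\beta)$ through $H_0$ of sublevel sets to obtain a persistence-module interleaving, then invoking stability/the Isometry Theorem --- is essentially the proof given there. The one nontrivial verification, that $i^{2\epsilon}$ restricted to $f^{-1}(-\infty,t]$ induces the same map on $H_0$ into $H_0(f^{-1}(-\infty,t+2\epsilon])$ as the inclusion, is handled correctly via the monotone path from $x$ to $i^{2\epsilon}(x)$ lying in the larger sublevel set. A minor remark: you only need the ``algebraic stability'' direction of the Isometry Theorem (an $\epsilon$-interleaving of modules implies $d_B \leq \epsilon$), originally due to Chazal et al.; the full equality is not required.
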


From this and Lemma \ref{lem:intmetric} one may show that $d_B$ is a pseudometric on $\MT$, i.e. $d_B$ satisfies all the axioms of a metric except $d_B\big((T,f),(T',f')\big)$ may be equal to zero for $(T,f) \neq (T',f')$ (e.g. Figure \ref{fig:mergetreepath}). In particular $d_B$ never returns the value $\infty$ on $\MT$.

\subsection{Intrinsic Distances}

For any (pseudo)metric on a topological space, one can define another related (pseudo)metric via continuous path length:

\begin{definition}
    \label{def:intrinsic}
    Let $X$ be a topological space and $d:X\times X\to \mathbb{R}$ be a (pseudo)metric. If $\gamma:[0,1] \to X$ is a continuous path (with respect to the topology on $X$), we define the length of $\gamma$ with respect to $d$ as
    \begin{equation*}
        L_d(\gamma) := \sup_{\substack{n\\ 0=t_0\leq \ldots \leq t_n=1}} \sum_{i = 1}^n d\big(\gamma(t_i),\gamma(t_{i-1})\big).
    \end{equation*}
    Then we set
    \begin{equation*}
        \widehat{d}(x_0,x_1) = \inf_\gamma L_d(\gamma),
    \end{equation*}
    where the infimum is to be taken over all paths $\gamma$ that are continuous in the topology on $X$, start at $x_0$, and end at $x_1$. We call $\widehat{d}$ the \emph{intrinsic (pseudo)metric induced by $d$ on the space $X$.}

    A \emph{geodesic (of $d$)} between $a$ and $b$ in $X$ is a continuous path $\gamma:[0,1]\to X$ such that
    \begin{itemize}
        \item $\gamma(0) = a$;
        \item $\gamma(1) = b$; and
        \item $L_{d}(\gamma) = \widehat{d}(a,b)$.
    \end{itemize}
\end{definition}

We illustrate a path between two non-isomorphic merge trees with identical barcodes in Figure \ref{fig:mergetreepath}.

\begin{figure}[htbp]
    \centering
    \includegraphics[height=2.5in]{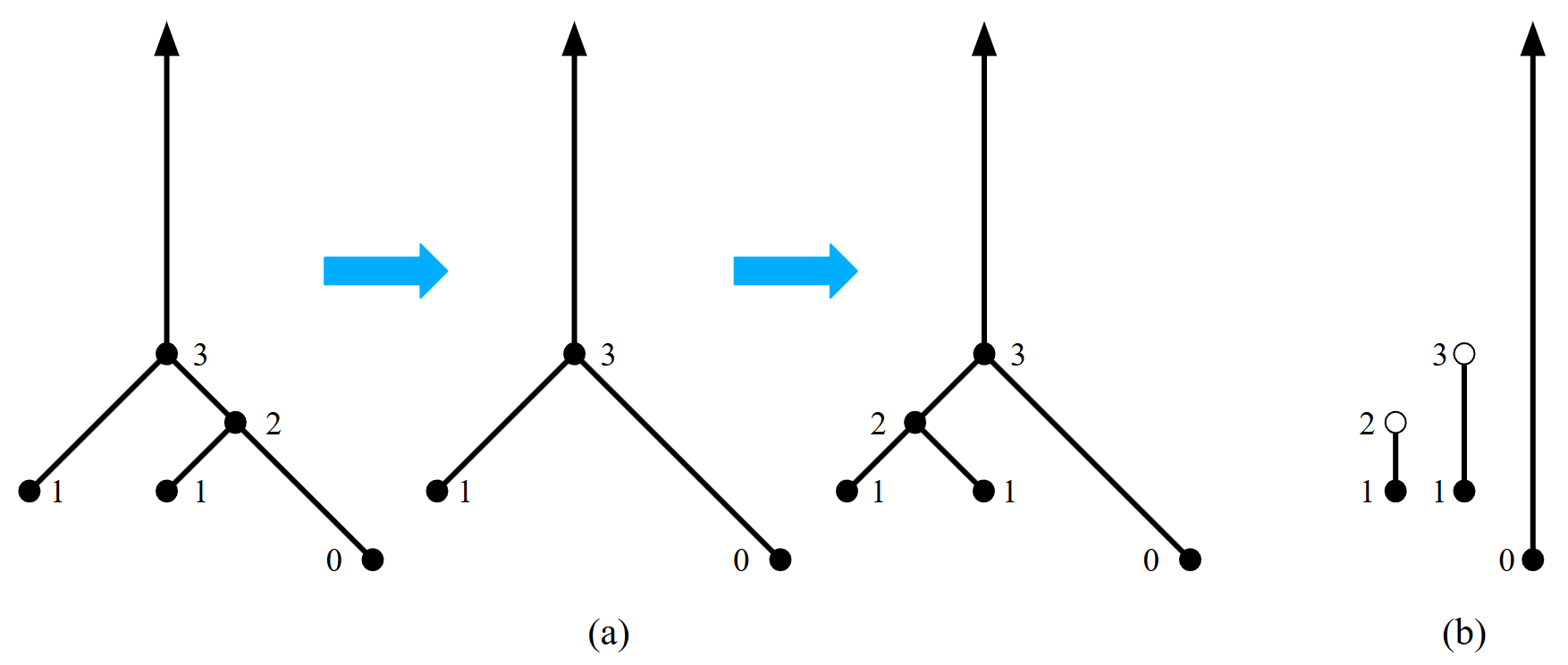}
    \caption{(a) The first, middle, and last points along a path $\gamma$ in $\MT$ between two merge trees with $d_B = 0$ but $d_I=1$. (b) The barcode of the merge trees at the endpoints in $\gamma$; the barcode at the midpoint is the same but missing the $[1,2)$ bar, which shows that $L_{d_B}(\gamma)\geq 1$. The first and last merge trees along the path cannot be isomorphic because the least common ancestors of the two leaves with $f$-value $1$ have different $f$-values.}
    \label{fig:mergetreepath}
\end{figure}

The main goal of this paper is to prove a conjecture from \cite{gasparovic2025intrinsic}, which says that on $\MT$ we have that $\widehat{d}_B = \widehat{d}_I$. However the definition above deviates from the definition of an intrinsic metric in \cite{gasparovic2025intrinsic} and so a few remarks are in order. In the definition of an intrinsic metric given in \cite{gasparovic2025intrinsic}, the authors assume $d$ is a metric and the infimum is taken over paths $\gamma$ that are continuous with respect to the topology given by $d$. Hence $\widehat{d}_I$ as defined here is identical to $\widehat{d}_I$ as defined in \cite{gasparovic2025intrinsic} (recall we equip $\MT$ with the topology induced by $d_I$).

We want to consider pseudometrics because $d_B$ is a pseudometric on $\MT$: there are non-isomorphic merge trees $(T,f)$ and $(T',f')$ with $d_B\big((T,f),(T',f')\big)=0$ (see Figure \ref{fig:mergetreepath} and \cite{curry2018fiber}). Thus paths in $\MT$ that are continuous in $d_B$ can `teleport' between the distinct merge trees $(T,f)$ and $(T',f')$. Hence, taking the definition of intrinsic distance as in \cite{gasparovic2025intrinsic}, the conjecture from the same paper is false for uninteresting reasons: if we equip $\MT$ with the topology induced by $d_B$, then $d_B\big((T,f),(T',f')\big)=0$, which implies $\widehat{d}_B\big((T,f),(T',f')\big)=0$ but $\widehat{d}_I\big((T,f),(T',f')\big) \geq d_I\big((T,f),(T',f')\big) \neq 0$. Our own definition avoids this technical issue and with it the aforementioned conjecture holds true. We remark that our definition of intrinsic distances is unoriginal, even in topological data analysis, with previous uses of essentially the same definition appearing in \cite{Carrire2017LocalEA} and \cite{vipond2020local} for similar reasons. In fact, the overall strategy of Sections \ref{sec:loceq} and \ref{sec:intrinicint} of this paper resembles that of Carri\`{e}re and Oudot \cite{Carrire2017LocalEA}, although in their work an inequality of intrinsic metrics is proven whereas we prove an equality.

A central result of \cite{gasparovic2025intrinsic} is the following.

\begin{theorem}\label{thm:dI}
    \label{prop:intrinsicinterleaving}
    On $\MT$, $d_I =\widehat{d}_I$.
\end{theorem}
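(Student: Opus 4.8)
\textbf{Proof proposal for Theorem \ref{thm:dI} ($d_I = \widehat{d}_I$ on $\MT$).}

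The plan is to prove the two inequalities $d_I \leq \widehat{d}_I$ and $\widehat{d}_I \leq d_I$ separately, with the first being essentially formal and the second being the substantive content. For the inequality $d_I \leq \widehat{d}_I$: this is a general feature of intrinsic metrics. Given any continuous path $\gamma:[0,1]\to\MT$ from $(T,f)$ to $(T',f')$ and any partition $0 = t_0 \leq \cdots \leq t_n = 1$, the triangle inequality for $d_I$ (which holds by Lemma \ref{lem:intmetric}) gives $d_I(\gamma(0),\gamma(1)) \leq \sum_{i=1}^n d_I(\gamma(t_i),\gamma(t_{i-1})) \leq L_{d_I}(\gamma)$. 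Taking the infimum over all such paths yields $d_I \leq \widehat{d}_I$, and this requires no special structure of $\MT$ beyond $d_I$ being a genuine metric.

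The real work is the reverse inequality $\widehat{d}_I \leq d_I$: given merge trees $(T,f)$ and $(T',f')$ with $d_I((T,f),(T',f')) = \epsilon$, I must exhibit (or approximate) a continuous path in $\MT$ from one to the other whose $d_I$-length is at most $\epsilon$ (or $\epsilon + \text{error}$, with the error $\to 0$). The natural construction is a \emph{linear interpolation through the interleaving}: using Lemma \ref{lem:infint}, fix an actual $\epsilon$-interleaving $(\alpha,\beta)$ realizing the distance. One then wants to build, for $s\in[0,1]$, an intermediate merge tree $(T_s, f_s)$ that ``morphs'' from $(T,f)$ to $(T',f')$ so that $d_I((T_s,f_s),(T_{s'},f_{s'})) \leq |s-s'|\,\epsilon$. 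A clean way to do this is via the matrix representation of Lemma \ref{lem:matrixupperbound}: after lifting to a common leaf set (padding with ``ghost'' leaves of very low birth value as needed, in the spirit of the bottleneck cost of unmatched bars), both trees are encoded by symmetric matrices $M$ and $M'$ of LCA-heights satisfying the ultrametric-type inequalities that characterize merge trees; one linearly interpolates $M_s = (1-s)M + sM'$, checks that $M_s$ still satisfies those inequalities (so defines a genuine merge tree $(T_s,f_s)$), and observes that $\|M_s - M_{s'}\|_\infty = |s-s'|\,\|M-M'\|_\infty$. Lemma \ref{lem:matrixupperbound} then bounds $d_I((T_s,f_s),(T_{s'},f_{s'})) \leq |s-s'|\,\|M - M'\|_\infty$, and choosing the interleaving/matching so that $\|M - M'\|_\infty$ is as close to $\epsilon = d_I$ as desired completes the argument. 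Continuity of $s\mapsto(T_s,f_s)$ in $\MT$ follows from the same Lipschitz estimate.

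The main obstacle I anticipate is twofold. First, the matrix $M(T,f)$ depends on a choice of \emph{labeling} of the leaves, and the two trees may not even have the same number of leaves; one must choose the correspondence of leaves (and the ghost-leaf padding) induced by the optimal interleaving so that the $\ell_\infty$ distance between the resulting matrices is controlled by $d_I$ rather than something larger — this is exactly the point where the interleaving maps $\alpha,\beta$ must be exploited to read off a good leaf matching, and it is delicate because $\alpha$ need not send leaves to leaves. Second, one must verify that the linear interpolation $M_s$ of two valid LCA-matrices is again a valid LCA-matrix (i.e. corresponds to an honest merge tree): the defining conditions are of the form $M_{ij} \geq \max(M_{ii}, M_{jj})$ and $M_{ik} \geq \min(M_{ij}, M_{jk})$ type ultrametric inequalities, and while the first is obviously preserved under convex combination, the min-type inequality is not linear and needs a short argument (possibly requiring that the two matrices be ``compatibly ordered,'' which again should be arranged via the interleaving, or else handled by subdividing the path into finitely many pieces on each of which the combinatorial type is constant). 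If the interpolation cannot be made globally, the fallback is to concatenate finitely many short linear segments, controlling the length of each by Lemma \ref{lem:matrixupperbound} and summing; the total length still telescopes to $\|M - M'\|_\infty \leq \epsilon + \delta$ for arbitrary $\delta > 0$.
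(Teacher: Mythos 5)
The paper does not prove Theorem~\ref{thm:dI}; it is imported verbatim from Gasparovic et al.\ \cite{gasparovic2025intrinsic}, so there is no in-paper proof to compare against. Judged on its own terms, your proposal has a genuine gap. The easy direction $d_I \le \widehat{d}_I$ is fine. The substantive direction hinges on two claims that are not established. First, you need that after an appropriate choice of leaf matching and ghost-leaf padding, $\|M(T,f) - M(T',f')\|_\infty$ can be made arbitrarily close to $d_I$; Lemma~\ref{lem:matrixupperbound} only gives the inequality $d_I \le \|M - M'\|_\infty$, and the converse (that some labeling realizes it up to $\delta$) is a nontrivial theorem (essentially \cite[Theorem 4.1]{gasparovic2025intrinsic}) that you would have to prove or cite, not just gesture at. You flag this as ``delicate'' but do not supply the argument.

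Second, and more seriously, the convex combination $M_s = (1-s)M + sM'$ of two valid LCA-matrices generally fails the ultrametric triple condition (that among $M_{ij}, M_{ik}, M_{jk}$ the two largest agree). Concretely, off-diagonal blocks $(1,3,3)$ and $(3,1,3)$ interpolate at $s = \tfrac12$ to $(2,2,3)$, which is not ultrametric. Your two proposed fixes do not close this gap: ``compatibly ordered'' matrices are exactly the ones in a common $Y_i$ from Proposition~\ref{prop:mtnhyp}, and two arbitrary merge trees need not admit any leaf correspondence making them compatibly ordered; and ``subdividing the linear path into pieces of constant combinatorial type'' presupposes the linear path stays in the valid set, which is exactly what fails. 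To make this strategy work you would need to construct a genuinely different path (e.g., one that at each step moves only entries that do not disturb the ultrametric condition, or a path through intermediate trees that are pairwise compatibly ordered with telescoping $\ell_\infty$-increments), and that construction is the whole content of the theorem. As written, the proposal identifies the right obstacles but does not overcome them.
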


\subsection{Good Maps and Labeled Merge Trees}

This section provides background necessary for Section \ref{sec:geodesics}, where we prove the existence of geodesics (with respect to $d_B$ and $d_I$) in $\MT$. The information provided here may be skipped by the reader who is only interested in earlier sections of the paper.

For convenience, we define $[n]$ to be the set $\{1, 2, \ldots,n\}$.

\begin{definition}
    A \emph{labeled merge tree} with $n$ labels is a triple $(T,f,\pi)$ where $(T,f)$ is a merge tree and $\pi:[n] \to T$ is a mapping containing each leaf of $T$ in its image. We denote by $\LMT_n$ the space of labeled merge trees with $n$ labels. We may associate to each labeled merge tree $(T,f,\pi)$ a $n \times n$ matrix $\mathcal{M}(T,f,\pi)$ given by $\mathcal{M}(T,f,\pi)_{ii} = f\pi(i)$ and $\mathcal{M}(T,f,\pi)_{ij} = f\big(\LCA(\pi(i),\pi(j))\big)$ when $i \neq j$.

    Given $(T,f,\pi), (T',f',\pi')\in \LMT_n$, the \emph{labeled interleaving distance} between the two labeled merge trees is defined as
    \begin{equation*}
        d_I^L\big((T,f,\pi), (T',f',\pi')\big) := \sup_{i,j}\big|\mathcal{M}(T,f,\pi)_{ij} - \mathcal{M}(T',f',\pi')_{ij}\big|\,.
    \end{equation*}
\end{definition}

By forgetting labelings, one may attain a merge tree from any labeled merge tree. Theorem 4.1 in \cite{gasparovic2025intrinsic} implies that, for any labeled merge trees $(T,f, \pi)$ and $(T',f',\pi')$,
\begin{equation*}
    d_I\big((T,f),(T',f')\big) \leq d_I^L\big((T,f,\pi), (T',f',\pi')\big)\,.
\end{equation*}

The other relevant definition is that of a $\delta$-good map.
\begin{definition}
    A \emph{$\delta$-good map} from a merge tree $(T,f)$ to another merge tree $(T',f')$ is a continuous map $\alpha:T\to T'$ such that
    \begin{enumerate}
        \item $f'\alpha(x)$ = $f(x) + \delta$ for all $x \in T$;
        \item if $w\in\im\alpha$ and $x' = \LCA(\alpha^{-1}(w))$, then $f'(x') - f(u) \leq 2\delta$ for all $u \in \alpha^{-1}(w)$; and
        \item For all $w \in T' -\im \alpha$, if $x' \preceq w$, then $f(w) - f(x) \leq 2\delta$.
    \end{enumerate}
\end{definition}

To compare merge trees, $\delta$-good maps are sometimes used in place of interleavings as it is known that the infimum of all $\delta$ such that there exists a $\delta$-good map from $(T,f)$ to $(T',f')$ is the interleaving distance between the two merge trees \cite{touli2022fpt}. We will use in particular that if $\alpha:T\to T'$ and $\beta:T' \to T$ form a $\delta$-interleaving between $(T,f)$ and $(T',f')$, then $\alpha$ is a $\delta$-good map from $(T,f)$ to $(T',f')$ \cite[Lemma 1]{touli2022fpt}.

\section{A Local Equality}
\label{sec:loceq}

The goal of this section is to prove the following theorem, from which the conjecture of Gasparovic et al. will follow without much more work.

\begin{theorem}
    \label{thm:localeq}
    Let $(T,f)\in\MT$. There is a neighborhood $W$ of $(T,f)$ in $\MT$ such that for all $(T',f')\in W$,
    \begin{equation*}
        d_B\big((T,f),(T',f')\big) = d_I\big((T,f),(T',f')\big)\,.
    \end{equation*}
\end{theorem}

Before we prove the theorem, we show that it would be impossible to strengthen this result to an equality between the interleaving and bottleneck distances on all pairs of merge trees in some neighborhood of $\MT$; that is, the local equality is genuinely a statement about distances \emph{to the fixed center} $(T,f)$, and cannot be upgraded to an isometry of a neighborhood.

\begin{proposition}\label{prop:no_isometry}
    Let $W$ be any neighborhood in $\MT$. Then there exist merge trees $(T_1,f_1), (T_2,f_2)\in W$ satisfying
    \begin{equation*}
        d_B\big((T_1,f_1), (T_2,f_2)\big) = 0 \qquad d_I\big((T_1,f_1), (T_2,f_2)\big) \neq 0\,.
    \end{equation*}
\end{proposition}

\begin{proof}
    It suffices to find merge trees $(T_1,f_1), (T_2,f_2)\in W$ that are not isomorphic but have identical barcodes. Fix a merge tree $(T,f)$ and $\epsilon_0>0$ such that the open ball $B_{\epsilon_0}(T,f)\subseteq W$. Let $C$ denote the (finite) set of $f$-values of leaves and branch points of $T$. Choose a point $x$ in the interior of some edge of $T$ with $f(x)\notin C$, and then choose $\epsilon>0$ small enough that
    \begin{equation*}
        \epsilon < \epsilon_0\,, \qquad \text{and} \qquad [f(x)-\epsilon,\ f(x)]\ \cap\ C = \varnothing\,.
    \end{equation*}
    Such an $\epsilon$ exists since $C$ is finite.

    Consider the barcode
    \begin{equation*}
        \big\{[f(x)-\epsilon, \infty),\ [f(x)-3\epsilon/4,\ f(x)),\ [f(x)-\epsilon/2,\ f(x)-\epsilon/4)\big\}\,,
    \end{equation*}
    shown in Figure \ref{fig:no_isometry}. Via the elder rule, this barcode is realized by two non-isomorphic merge trees, which we denote $(\tau_1,f_{\tau_1})$ and $(\tau_2,f_{\tau_2})$: the two finite bars may be attached to the infinite branch in either of two orders, and the resulting trees differ in which of the two shorter leaves shares a branch point with the third. These are the two non-isomorphic lifts of a three-bar barcode of the type described in \cite[Example 10]{curry2018fiber}; as $(\tau_1,f_{\tau_1})$ and $(\tau_2,f_{\tau_2})$ are non-isomorphic, $d_I\big((\tau_1,f_{\tau_1}),(\tau_2,f_{\tau_2})\big) = \delta$ for some positive $\delta$. On the other hand, $d_B((\tau_1,f_{\tau_1}),(\tau_2,f_{\tau_2})\big) = 0$ since the two trees have identical barcodes. We write $r_1, r_2$ for the roots of $\tau_1, \tau_2$, i.e. the unique points in $\tau_1$ and $\tau_2$ satisfying $f_{\tau_1}(r_1) = f_{\tau_2}(r_2) = f(x)$.

    We now graft $\tau_1$ onto $T$ at $x$. Let $\gamma_x \subseteq T$ be the increasing path from $x$ to $\infty$, and let $\gamma_{r_1} \subseteq \tau_1$ be the increasing path from the root $r_1$ to $\infty$. Define
    \begin{equation*}
        T_1 = (T \sqcup \tau_1)/\sim\,, \qquad \text{where } p \sim q \text{ if } p \in \gamma_x,\ q\in\gamma_{r_1}, \text{ and } f(p) = f_{\tau_1}(q)\,.
    \end{equation*}
    Since $f(x) = f_{\tau_1}(r_1)$, the identification glues $\gamma_x$ to $\gamma_{r_1}$ isometrically with respect to $f$-value, so
    \begin{equation*}
        f_1(p) := \begin{cases}
            f(p) & p \in T\,,\\
            f_{\tau_1}(p) & p \in \tau_1\,,
        \end{cases}
    \end{equation*}
    is well defined and continuous, and $(T_1,f_1)$ is a merge tree. Define $(T_2,f_2)$ analogously by grafting $\tau_2$ onto $T$ at $x$. We make the following observations.
    \begin{itemize}
        \item $i^\epsilon(T) = i^\epsilon(T_1) = i^\epsilon(T_2)$. Indeed, the only leaves and branch points of $T_1$ (resp. $T_2$) not already in $T$ are the non-root leaves and branch points of $\tau_1$ (resp. $\tau_2$), and all of these have $f$-value at most $f(x) - \epsilon/4 < f(x)$. Since $x \in \gamma_x = \gamma_{r_1}$, applying $i^\epsilon$ collapses the entire grafted subtree into the arc above $x$, recovering $i^\epsilon(T)$.
        \item Consequently $d_I\big((T,f),(T_j,f_j)\big) \leq \epsilon < \epsilon_0$, so $(T_1,f_1),(T_2,f_2)\in B_{\epsilon_0}(T,f) \subseteq W$. (The map $i^\epsilon$, together with the inclusion, exhibits an $\epsilon$-interleaving between each $(T_j,f_j)$ and $(T,f)$.)
        \item $d_B\big((T_1,f_1),(T_2,f_2)\big)=0$. Because the window $[f(x)-\epsilon, f(x)]$ meets no value of $C$, the three grafted bars form a disjoint addition to $B(T,f)$; as $\tau_1$ and $\tau_2$ contribute the same bars, $B(T_1,f_1) = B(T_2,f_2)$.
     \item $d_I\big((T_1,f_1), (T_2, f_2)\big) = \delta>0$. Suppose for contradiction that $\phi:T_1 \to T_2$ were a merge tree isomorphism. Then $\phi$ preserves $f$ and the ancestor relation, so it maps the height window $[f(x)-\epsilon, f(x)]$ to itself. By construction, this window contains only disjoint branch segments
     of $T$ and the branch containing $x$ has critical points coming from $\tau_1, \tau_2,$ respectively. As such, $\phi$ sends leaves in $T_1$ in the image of $\tau_1$ to leaves in $T_2$ in the image of $\tau_2$, while $\phi^{-1}$ sends leaves in $T_2$ in the image of $\tau_2$ to leaves in $T_1$ in the image of $\tau_1$. Since the mappings of $\tau_1$ into $T_1$ and of $\tau_2$ into $T_2$ are inclusions, $\phi$ and $\phi^{-1}$ hence restrict to a pair of continuous maps between $\tau_1$ and $\tau_2$ which are inverses to each other. This is impossible since $\tau_1$ and $\tau_2$ are not isomorphic, so $(T_1,f_1)$ and $(T_2,f_2)$ have positive interleaving distance.
    \end{itemize}
    Thus $(T_1,f_1), (T_2,f_2) \in W$ are non-isomorphic with identical barcodes, completing the proof.
\end{proof}

\begin{figure}
    \centering
    \includegraphics[width=0.265\linewidth]{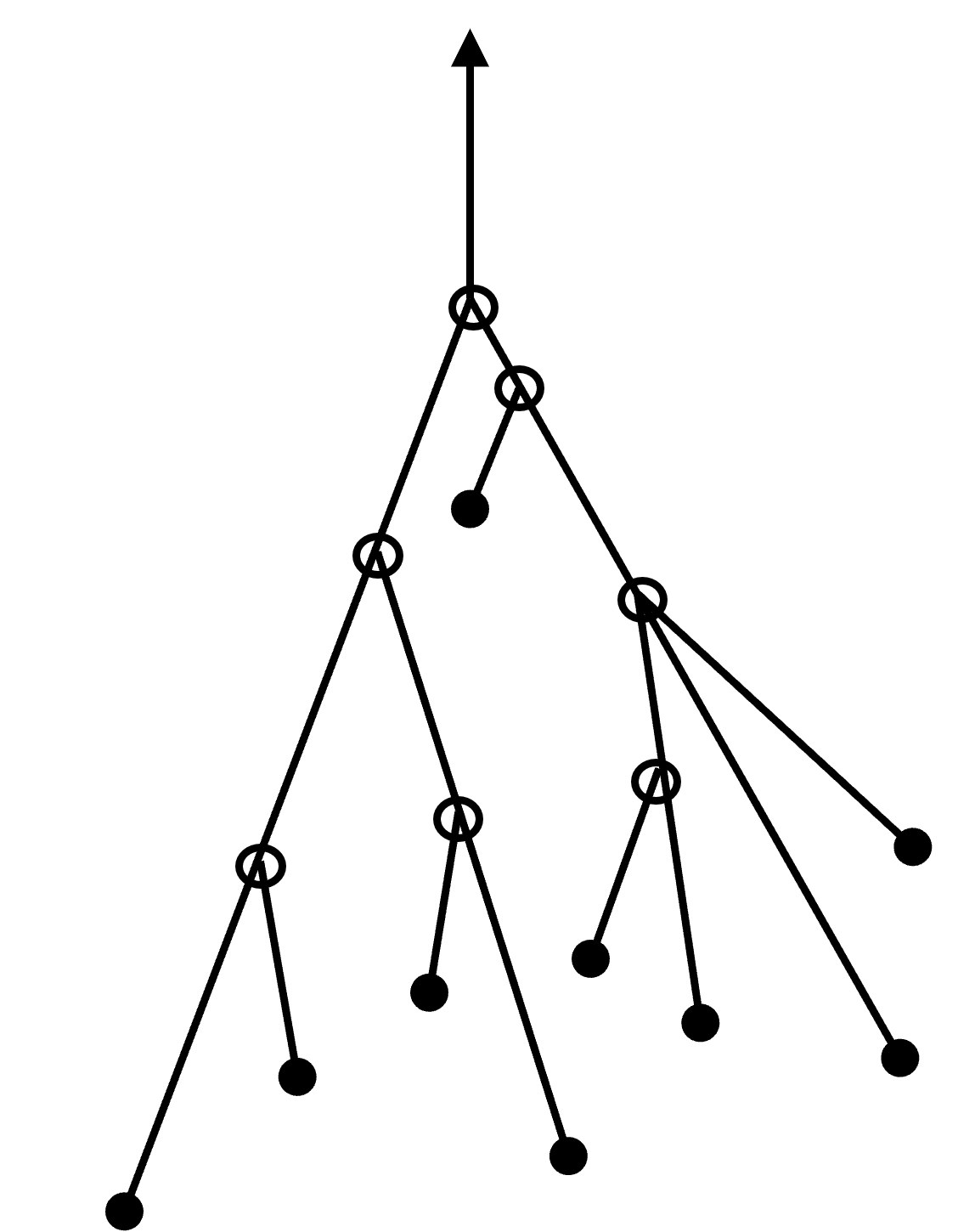}
    \includegraphics[width=0.62\linewidth]{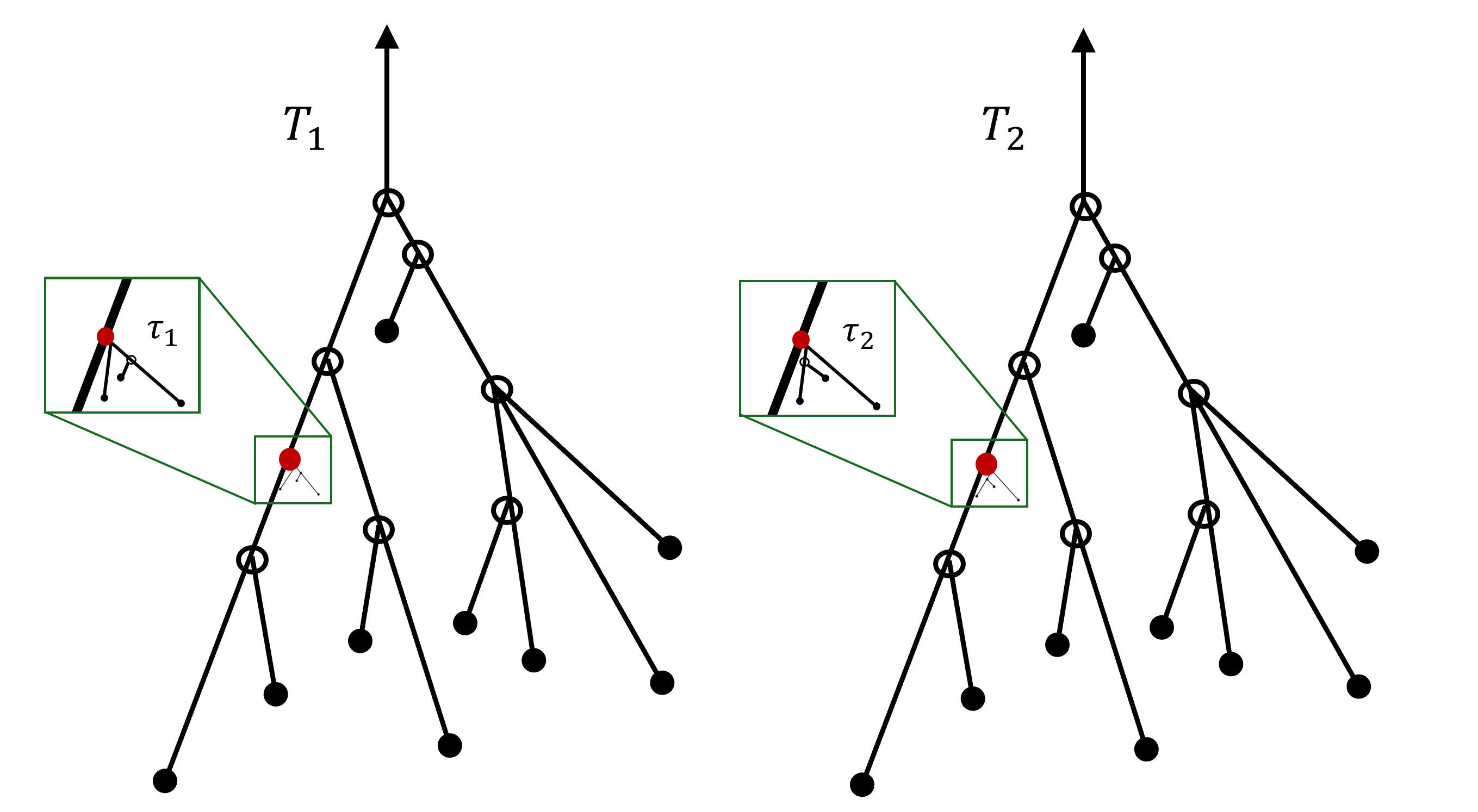}
    \includegraphics[width=0.9\linewidth]{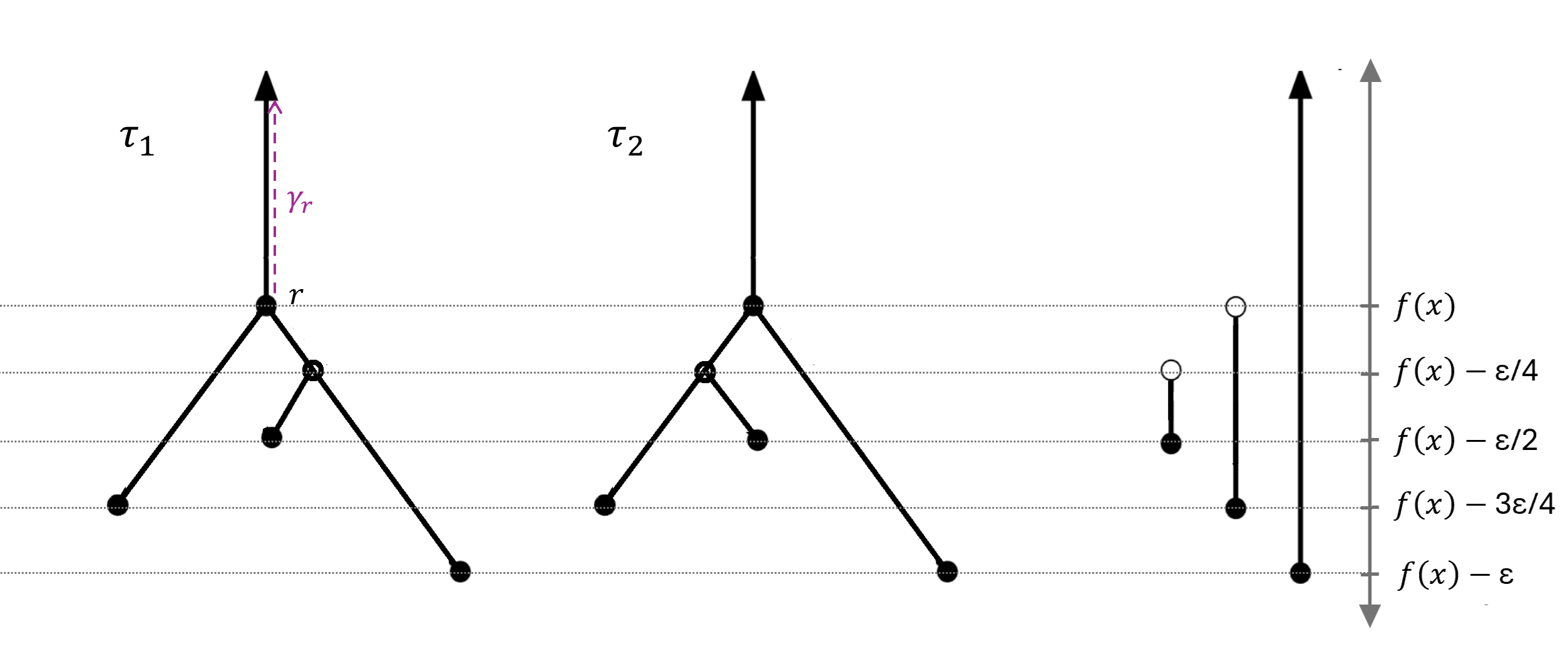}
    \caption{Construction in the proof of Proposition~\ref{prop:no_isometry}. Upper left, an example tree $T$ as in the proof. Below, $\tau_1$ and $\tau_2$ are constructed with function values indicated, so that $f_{\tau_1}(r_1) = f_{\tau_2}(r_2) = f(x)$, where $x\in T$ is the gluing point, marked in red; it is chosen so that the window $(f(x)-\epsilon, f(x))$ contains no leaf or branch point of $T$. Top center and right, $T_1$ and $T_2$ are formed by grafting $\tau_1$ and $\tau_2$ onto $T$, identifying $\gamma_{r_1}$ (resp. $\gamma_{r_2}$) with $\gamma_x$.}
    \label{fig:no_isometry}
\end{figure}

The rest of this section is devoted to proving Theorem \ref{thm:localeq}. We fix an arbitrary merge tree $(T,f)\in\MT$ throughout the remainder of this section and let $B$ denote the barcode attained by applying the elder rule to $(T,f)$. We let $C$ denote the set of $f$-values of leaves and branch points of $T$, noting that equivalently $C$ is the set of non-infinite endpoints of $B$, by the elder rule. We define $\epsilon$ to be a positive number sufficiently small that if $x,y \in C$ and $x\neq y$, then $|x-y| > 4\epsilon$.

We define $W$ to be the $\epsilon$-ball about $(T,f)$ in $\MT$, that is, the set
\begin{equation*}
    W = \big\{(T',f')\in \MT: d_I\big((T,f),(T',f')\big)<\epsilon\big\}\,.
\end{equation*}
We fix for the entire section a merge tree $(T',f')$ chosen arbitrarily from $W$ and an $\epsilon$-interleaving $\alpha:T \to T'$, $\beta:T'\to T$.

Let $l_0,\ldots,l_n$ be the leaves of $(T,f)$. We order these leaves such that if $f(l_i) < f(l_j)$, then $i < j$. We let $l'_0,\ldots,l'_m$ denote the leaves of $(T',f')$. We similarly order these leaves such that if $f'(l'_i) < f'(l'_j)$, then $i < j$.

\begin{lemma}
    \label{lem:sigma}
    For $0\leq i \leq n$, the optimization problem
    \begin{equation*}
        \argmin_{x\in T'} f'(x) \quad \text{subject to} \quad l_i \preceq \beta(x) \preceq i^{2\epsilon} (l_i)
    \end{equation*}
    has a solution, and any solution is a leaf. For each $0 \leq i \leq n$ we pick such a solution and denote it by $l'_{\sigma(i)}$. We have:
    \begin{equation*}
        |f'(l'_{\sigma(i)}) - f(l_i)| \leq \epsilon\,.
    \end{equation*}
    The function $\sigma$ is injective and satisfies that if $f(l_i) < f(l_j)$, then $f'(l'_{\sigma(i)}) < f'(l'_{\sigma(j)})$.
\end{lemma}

In particular, the fact that $\sigma$ is injective implies that $T$ has at most as many leaves as $T'$ has, i.e. $n \leq m$.

\begin{proof}
    First note that $\alpha(l_i)$ satisfies the constraint of the optimization problem, since $\beta \alpha (l_i) = i^{2\epsilon}(l_i)$, so we are not optimizing over the empty set. Let $I$ denote the set of points $y$ such that $l_i \preceq y \preceq i^{2\epsilon}(l_i)$. Then $I$ is compact (it is homeomorphic to a closed interval), and as interleaving maps are proper, $\beta^{-1}(I)$ is compact. Since $\beta^{-1}(I)$ is also nonempty, a minimum of $f'$ must be attained on $\beta^{-1}(I)$, proving a solution to this optimization problem exists.

    If $x$ is a solution to this optimization problem, suppose $x$ is not a leaf. Then there exists a leaf $l'_k \preceq x$ with $f'(l'_k) < f'(x)$. As such, $\beta(l'_k) \preceq \beta(x) \preceq i^{2\epsilon}(l_i)$, so it must be the case that $\beta(l'_k)$ is not an ancestor of $l_i$. This implies there is a branch point $b$ with $l_i \preceq b \preceq x \preceq i^{2\epsilon}(l_i)$, which is impossible as our choice of $\epsilon$ assures that there are no branch points in $f^{-1}(f(l_i),f(l_i) + 2\epsilon]$. So $x = l'_{\sigma(i)}$ is a leaf.

    Since $l_i \preceq \beta(l'_{\sigma(i)}) \preceq i^{2\epsilon}(l_i)$, we have $f(l_i) \leq f'\beta(l'_{\sigma(i)}) \leq f(l_i) + 2\epsilon$, which is equivalent to the statement
    \begin{equation*}
        |f'(l'_{\sigma(i)}) -f(l_i)| \leq \epsilon\,.
    \end{equation*}
    If $\sigma(i) = \sigma(j)$ for $i \neq j$, then in particular $\beta(l'_{\sigma(i)})$ is an ancestor of both $l_i$ and $l_j$. It follows that there is a branch point in $f^{-1}(f(l_i),f(l_i) + 2\epsilon]$, which is impossible. Hence $\sigma$ is injective.

    If $f(l_i) < f(l_j)$, then our choice of $\epsilon$ implies $f(l_j) - f(l_i) > 4\epsilon$. By the triangle inequality we conclude that $f'(l'_{\sigma(j)})-f'(l'_{\sigma(i)}) > 2\epsilon$, so that $f'(l'_{\sigma(i)}) < f'(l'_{\sigma(j)})$.
\end{proof}

We may reorder the leaves $l_i$ so that if $f(l_i) = f(l_j)$, but $f'(l'_{\sigma(i)}) < f'(l'_{\sigma(j)})$, then $i < j$. Moreover this reordering may be done while maintaining that $f(l_i) < f(l_j)$ implies $f'(l'_{\sigma(i)}) < f'(l'_{\sigma(j)})$. Once we have done this, we have that $\sigma$ is an increasing injective function.

Let $E = \{0,\ldots, m\} - \im\sigma$. We may reorder the leaves of $(T',f')$ so that if $f'(l'_i) = f'(l'_j)$ and $i \notin E$, but $j \in E$, then $i < j$. This reordering may be done while maintaining that $\sigma$ is increasing. Once we have done this, we are done reordering leaves of $(T,f)$ and $(T',f')$. In particular, our choice of ordering now implies that $\sigma(0) = 0$.

\begin{lemma}
    \label{lem:branchpoints}
    For each $1 \leq i \leq n$, there exists a unique solution to the optimization problem
    \begin{align*}
        \argmin_{x \in T}f(x) \quad \text{ subject to } \quad l_i, l_j \preceq x\text{, for some } j < i\,.
    \end{align*}
    For each $1 \leq i \leq m$, there exists a unique solution to the optimization problem
    \begin{equation*}
        \argmin_{x \in T'}f'(x) \quad \text{ subject to } \quad l'_i, l'_j \preceq x\text{, for some } j < i\,.
    \end{equation*}
    We denote the solutions to these optimization problems as $b_i$ and $b'_i$ respectively. These points are branch points and satisfy that for each $j \in E$,
    \begin{equation*}
        f'(b'_{j}) - f'(l'_j) \leq 2\epsilon\,.
    \end{equation*}
\end{lemma}

\begin{proof}
    Since $i > 0$, there exists $x$ satisfying $l_0,l_i \preceq x$, so the set we are optimizing over in the first problem is nonempty. We define
    \begin{align*}
        X &= \{x \in T: l_i \preceq x\}\,,\\
        Y &= \{x \in T: l_j \preceq x \text{ for some }j<i\}\,.
    \end{align*}
    Both sets are closed, so $X \cap Y$ is closed, and we have already verified $X\cap Y$ is nonempty. Since $X$ and $Y$ are bounded below in $f$-value, so is $X\cap Y$. It follows that a minimum of $f$ is attained on $X \cap Y$ as $f$ is proper and bounded below on $X \cap Y$, a nonempty set. Since $X \cap Y$ is the set we optimize over in the first optimization problem, this implies that the first optimization problem has a solution. This solution is unique since elements of $X$ (i.e. ancestors of $l_i$) are totally ordered by $f$. Hence $b_i$ is uniquely defined. The point $b_i$ must be a branch point as otherwise the greatest branch point descendant from $b_i$ also is a solution of the optimization problem. The proof that $b'_i$ is well defined, uniquely defined, and a branch point is analogous.

    For $j \in E$, there exists a leaf $l_i$ such that $l_i \preceq \beta(l'_j)$. It follows that:
    \begin{equation*}
        \beta(l'_{\sigma(i)}) \preceq \beta(l'_j) \implies \alpha\beta(l'_{\sigma(i)}) \preceq \alpha\beta(l'_j) \implies i^{2\epsilon}(l'_{\sigma(i)}) \preceq i^{2\epsilon}(l'_j) \implies l'_{\sigma(i)} \preceq i^{2\epsilon}(l'_j)\,.
    \end{equation*}
    Moreover, the fact that $\beta(l'_{\sigma(i)}) \preceq \beta(l'_j)$ implies that $f'(l'_{\sigma(i)}) \leq f'(l'_j)$, which implies that $\sigma(i) < j$. Hence, we conclude that
    \begin{equation*}
        l'_j \preceq b'_j \preceq i^{2\epsilon}(l'_j)\,,
    \end{equation*}
    implying that $f'(b'_j) - f'(l'_j) \leq 2\epsilon$, concluding the proof.
\end{proof}

At this point, we note that since $B$ is the barcode attained by applying the elder rule to $(T,f)$,
\begin{equation*}
    B = \big[f(l_0), \infty\big) \sqcup \bigsqcup_{i = 1}^n \big[f(l_i), f(b_i)\big)\,.
\end{equation*}

We let $B'$ denote the barcode obtained by applying the elder rule to $(T',f')$. Hence
\begin{equation*}
    B' = \big[f'(l'_0), \infty\big) \sqcup \bigsqcup_{i = 1}^m \big[f'(l'_i), f'(b'_i)\big)\,.
\end{equation*}

We let $\delta$ denote the cost of an optimal partial matching between $B$ and $B'$. It follows from Theorem~\ref{thm:bottlelessthaninterleaving} that $\delta \leq \epsilon$.

\begin{lemma}
    \label{lem:deltaclose}
    For $1 \leq i \leq n$,
    \begin{equation*}
        |f'(l'_{\sigma(i)}) - f(l_i)| \leq \delta\,,  \quad |f'(b'_{\sigma(i)}) - f(b_i)| \leq \delta\,,\quad \text{and}\quad f'(b'_{\sigma(i)}) - f'(l'_{\sigma(i)}) > 2\epsilon\,.
    \end{equation*}
    For $j \in E$,
    \begin{equation*}
        f'(b'_j) - f'(l'_j) \leq 2\delta\,.
    \end{equation*}
\end{lemma}

\begin{proof}
    Let $\phi:B-S \to B'$ be an optimal partial matching. Since, for $1\leq i \leq n$
    \begin{equation*}
        \frac{f(b_i) - f(l_i)}{2} \geq \frac{4\epsilon}{2} > \epsilon \geq \delta\,,
    \end{equation*}
    we conclude that $S$ must be empty, so that $\phi:B \to B'$. Since $f(b_i) - f(l_i) > 4\epsilon$ for $1 \leq i \leq n$ and $f'(b'_j) - f'(l'_j) \leq 2 \epsilon$ for all $j \in E$, we conclude for $1 \leq i \leq n$ and $j \in E$, that either
    \begin{equation*}
        |f'(b'_j) - f(b_i)| > \epsilon \geq \delta \quad \text{or} \quad |f'(l'_j) - f(l_i)| > \epsilon \geq \delta\,,
    \end{equation*}
    which implies that each interval $\big[f'(l'_j), f'(b'_j)\big)$ is not in the image of $\phi$. It follows that for each $j \in E$, we have
    \begin{equation*}
        \frac{f'(b'_j) - f'(l'_j)}{2} \leq \delta \iff f'(b'_j) - f'(l'_j) \leq 2\delta\,.
    \end{equation*}
    Moreover, it follows that $\phi$ surjects onto the submultiset of intervals
    \begin{equation*}
        \bigsqcup_{k = 1}^n\big[f'(l'_{\sigma(k)}), f'(b'_{\sigma(k)})\big)\,.
    \end{equation*}

    Hence we may choose $k$ such that $\phi$ maps $\big[f(l_k), f(b_k)\big)$ to $\big[f'(l'_{\sigma(i)}), f'(b'_{\sigma(i)})\big)$. If $f(l_i) \neq f(l_k)$, then $|f(l_k) - f(l_i)| > 4\epsilon$ due to our choice of $\epsilon$. The triangle inequality then would imply
    \begin{equation*}
        |f'(l'_{\sigma(i)}) - f(l_k)| > 3\epsilon > \delta\,,
    \end{equation*}
    contradicting that $\phi$ is a cost minimizing partial matching. Hence $f(l_i) = f(l_k)$ and so
    \begin{equation*}
        |f'(l'_{\sigma(i)}) - f(l_i)| =|f'(l'_{\sigma(i)}) - f(l_k)| \leq \delta\,.
    \end{equation*}

    Due to our choice of $\epsilon$, and the fact that $b'_k$ must be a branch point, we have
    \begin{equation*}
        f(b_k) - f(l_k) > 4\epsilon\,.
    \end{equation*}
    Since $\phi$ maps $\big[f(l_k), f(b_k)\big)$ to $\big[f'(l'_{\sigma(i)}), f'(b'_{\sigma(i)})\big)$, it follows that
    \begin{equation*}
        f'(b'_{\sigma(i)}) - f'(l'_{\sigma(i)}) \geq f(b_k) - f(l_k) - 2\delta > 4\epsilon - 2\delta \geq  2\epsilon\,.
    \end{equation*}
    Now let $1 \leq i \leq n$. By definition, there is an index $j< \sigma(i)$ such that $l'_j \preceq b'_{\sigma(i)}$. Since $f'(l'_j) \leq f'(l'_{\sigma(i)})$, it follows that $i^{2\epsilon}(l'_j) \preceq b'_{\sigma(i)}$. If $j \in E$, pick an index $r$ such that $l_r \preceq \beta(l'_j)$. It then follows that $\beta(l'_{\sigma(r)}) \preceq \beta(l'_j)$, which implies both that $l'_{\sigma(r)} \preceq i^{2\epsilon}(l'_{\sigma(r)}) \preceq i^{2\epsilon}(l'_{j}) \preceq b'_{\sigma(i)}$ and $\sigma(r) < j < \sigma(i)$. Otherwise, define $r$ such that $\sigma(r) = j$. Either way,
    \begin{align*}
        l'_{\sigma(r)}, l'_{\sigma(i)} \preceq b'_{\sigma(i)} \implies \beta(l'_{\sigma(r)}), \beta(l'_{\sigma(i)}) \preceq \beta(b'_{\sigma(i)}) &\implies l_r,l_i \preceq \beta(b'_{\sigma(i)})\\
        &\implies b_i \preceq \beta(b'_{\sigma(i)})\\
        &\implies f(b_i) - f'(b'_{\sigma(i)}) \leq \epsilon\,.
    \end{align*}
    For the other inequality, note that there exists $j<i$ such that $l_i,l_j \preceq b_i$. Since there are no leaves or branch points in $f^{-1}(f(l_i), f(l_i) + 2\epsilon]$ or $f^{-1}(f(l_j), f(l_j) + 2\epsilon]$, we have:
    \begin{align*}
        i^{2\epsilon}(l_i), i^{2\epsilon}(l_j) \preceq b_i \implies \beta(l'_{\sigma(i)}), \beta(l'_{\sigma(j)}) \preceq b_i &\implies \alpha\beta(l'_{\sigma(i)}), \alpha\beta(l'_{\sigma(j)}) \preceq \alpha(b_i)\\
        &\implies i^{2\epsilon}(l'_{\sigma(i)}), i^{2\epsilon}(l'_{\sigma(j)}) \preceq \alpha(b_i)\\
        &\implies l'_{\sigma(i)}, l'_{\sigma(j)} \preceq \alpha(b_i)\\
        &\implies b'_{\sigma(i)} \preceq \alpha(b_i)\\
        &\implies f'(b'_{\sigma(i)}) - f(b_i) \leq \epsilon\,.
    \end{align*}
    We have deduced that $|f'(b'_{\sigma(i)}) - f(b_i)| \leq \epsilon$. We conclude from this that $|f'(b'_{\sigma(i)}) - f(b_i)| \leq \delta$ in a manner similar to how we showed $|f'(l'_{\sigma(i)}) - f(l_i)| \leq \delta$.
\end{proof}

\begin{lemma}
    \label{lem:plemma}
    For each $j \in E$ there exists a unique solution to the optimization problem
    \begin{equation*}
        \argmin_{x \in T'} f'(x) \quad \text{subject to} \quad l'_{\sigma(i)}, l'_j \preceq x \text{, for some }\sigma(i) < j \,.
    \end{equation*}
    Denote this solution by $p'_j$. We have that each $p'_j$ is a branch point and satisfies
    \begin{equation*}
        f'(p'_j) - f'(l'_j) \leq 2\delta\,.
    \end{equation*}
    Let $q$ be the least index such that $l'_q \preceq p'_j$. Then $q \in \im \sigma$.
\end{lemma}

\begin{proof}
    The existence and uniqueness of $p'_j$ are proved similarly to the existence and uniqueness of the points $b_j$ and $b'_j$. The same is true for the fact that $p'_j$ is a branch point. For $j \in E$, we define $A(j)$ to be the least index such that $l'_{A(j)}\preceq b'_j$. We get a sequence $l'_j, l'_{A(j)}, \ldots, l'_{A^r(j)}, l'_{A^{r+1}(j)}$, where $j, A(j), \ldots, A^r(j) \in E$, but $A^{r+1}(j) \in \im \sigma$. From the definition of $A$ it follows that
    \begin{equation*}
        b'_j \preceq \ldots \preceq b'_{A^r(j)} \quad \text{and} \quad f'(l'_{A^r(j)})\leq \ldots \leq f'(l'_j)\,.
    \end{equation*}
    Moreover, since $l'_{A^{r+1}(j)} \preceq b'_{A^{r}(j)}$ and $l'_j \preceq b'_j \preceq b'_{A^{r}(j)}$, we deduce that $p'_j \preceq b'_{A^{r}(j)}$ since $A^{r+1}(j) \in \im \sigma$.
    Hence,
    \begin{equation*}
        f'(p'_j) - f'(l'_j) \leq f'(b'_{A^r(j)}) - f'(l'_{A^r(j)}) \leq 2\delta\,.
    \end{equation*}

    Now let $q$ be as in the statement of the lemma. If $q \in E$, then pick $k$ such that $l'_{\sigma(k)} \preceq p'_j$. From the definition of $q$ we deduce that $q < \sigma(k)$. It then follows that $l_k\preceq \beta(l'_{\sigma(k)}) \preceq \beta(p'_j)$. If $l_k \preceq \beta(p'_j) \preceq i^{4\epsilon}(l_k)$, then since there are no branch points in $f^{-1}(f(l_k), f(l_k) + 4\epsilon]$ and $\beta(l'_q) \preceq \beta(p'_j)$, it follows that $l_k \preceq \beta(l'_q)$. From the definition of $\sigma(k)$ it follows then that $\beta(l'_{\sigma(k)}) \preceq \beta(l'_q)$, from which we deduce that $f'(l'_{\sigma(k)}) \leq f'(l'_q)$. This implies $\sigma(k) < q$, a contradiction.

    Otherwise,
    \begin{align*}
        f\beta(p'_j) - f(l_k) \geq 4\epsilon \implies f'(p'_j) - f'(l'_{\sigma(k)}) \geq 2\epsilon &\implies f'(p'_j) - f'(l'_q) \geq 2\epsilon\\
        &\implies f'(p'_j) - f'(p'_q) \geq 0\,.
    \end{align*}
    Since both $p'_j$ and $p'_q$ are ancestors of $l'_q$, it follows that $p'_q \preceq p'_j$. Therefore there is an index $\sigma(d)$ with $\sigma(d) < q$ and $l'_{\sigma(d)} \preceq p'_q \preceq p'_j$, contradicting the definition of $q$. Hence $q$ is indeed in the image of $\sigma$.
\end{proof}

The final lemma we need before constructing a $\delta$-interleaving is the following.

\begin{lemma}
    \label{lem:lca}
    For $1 \leq i < j \leq n$,
    \begin{equation*}
        |f(\LCA(l_i,l_j)) - f'(\LCA(l'_{\sigma(i)}, l'_{\sigma(j)}))| \leq \delta\,.
    \end{equation*}
\end{lemma}

\begin{proof}
    For our convenience, we write $b = \LCA(l_i,l_j)$, $b' = \LCA(l'_{\sigma(i)}, l'_{\sigma(j)})$. Being least common ancestors, both $b$ and $b'$ are branch points. Since there are no branch points in $f^{-1}(f(l_i), f(l_i) + 2\epsilon]$, we have that $i^{2\epsilon}(l_i) \preceq b$. This implies
    \begin{equation*}
        l'_{\sigma(i)} \preceq i^{2\epsilon}(l'_{\sigma(i)}) = \alpha\beta(l'_{\sigma(i)}) \preceq \alpha i^{2\epsilon}(l_i) \preceq \alpha(b)\,.
    \end{equation*}
    Similarly, $l'_{\sigma(j)} \preceq \alpha(b)$. It follows that $b' \preceq \alpha(b)$, from which we deduce
    \begin{equation*}
        f'(b') - f(b) \leq \epsilon\,.
    \end{equation*}

    Again using that $i^{2\epsilon}(l_i) \preceq b$, we deduce $\beta(l'_{\sigma(i)}) \preceq i^{2\epsilon}(l_i) \preceq b$. Similarly, $\beta(l'_{\sigma(j)})\preceq i^{2\epsilon}(l_j) \preceq b$. We also have that $\beta(l'_{\sigma(i)}),\beta(l'_{\sigma(j)}) \preceq \beta(b')$. As such, we deduce
    \begin{equation*}
        b = \LCA(l_i,l_j) = \LCA(\beta(l'_{\sigma(i)}), \beta(l'_{\sigma(j)})) \preceq \beta(b')\,.
    \end{equation*}
    From this it follows that $f(b) - f'(b') \leq \epsilon$. So $|f'(b') - f(b)| \leq \epsilon$.

    Since $b$ is a branch point in $T$, $b = b_r$ for some index $r$. We wish to show $b' = b'_{\sigma(k)}$ for some index $k$. By picking a cellular structure on $T'$, define $e_i$ to be the edge incident to $b'$ such that $l'_{\sigma(i)} \preceq x \preceq b'$ for all $x \in e_i$. Define $e_j$ similarly. Pick $x_i \in e_i$ and $x_j \in e_j$. Let $\tau(i)$ denote the minimal integer such that $l'_{\tau(i)} \preceq x_i$ and define $\tau(j)$ similarly. If $\tau(i) \in E$, then we have $b' \preceq p'_{\tau(i)}$. Indeed, if this were not the case, then $p'_{\tau(i)} \preceq b'$ but $p'_{\tau(i)} \neq b'$, since both points are ancestors of $l'_{\tau(i)}$. Since $p'_{\tau(i)}$ is a branch point, this implies $p'_{\tau(i)} \preceq x_i$. Thus there exists some $k$ such that $\sigma(k) < \tau(i)$ and $l'_{\sigma(k)} \preceq p'_{\tau(i)} \preceq x_i$, contradicting the definition of $\tau(i)$. Thus $b' \preceq p'_{\tau(i)}$.

    Hence we have
    \begin{equation}
        \label{eqn:tauclose}
        f'(b') - f'(l'_{\tau(i)}) \leq f'(p'_{\tau(i)}) - f'(l'_{\tau(i)}) \leq 2\epsilon\,.
    \end{equation}
    On the other hand, since $\tau(i) \in  E$, we have that $\tau(i) \neq \sigma(i)$. Since $l'_{\tau(i)}, l'_{\sigma(i)} \preceq x_i$, it follows from the definition of $\tau(i)$ that $\tau(i) < \sigma(i)$. Hence, $b'_{\sigma(i)} \preceq x_i \preceq b'$. Therefore,
    \begin{equation*}
        f'(l'_{\tau(i)}) \leq f'(l'_{\sigma(i)}) \leq f'(b'_{\sigma(i)}) \leq f'(b')\,.
    \end{equation*}
    Hence,
    \begin{equation*}
        f'(b') - f'(l'_{\tau(i)}) \geq  f'(b'_{\sigma(i)}) - f'(l'_{\sigma(i)}) > 2\epsilon\,,
    \end{equation*}
    contradicting Equation (\ref{eqn:tauclose}). Therefore, $\tau(i) \notin E$, and so $\tau(i) = \sigma(q)$ for some index $q$. Similarly we may show $\tau(j) = \sigma(k)$ for some index $k$. Without loss of generality, we may assume that $\sigma(q) < \sigma(k)$. It then follows that $b' = b'_{\sigma(k)}$. Since $b = b_r$ we have
    \begin{equation*}
        |f(b_r) - f'(b'_{\sigma(k)})| = |f(b) - f'(b')| \leq \epsilon\,.
    \end{equation*}
    This implies
    \begin{equation*}
        |f(b_r) - f(b_k)| \leq 2\epsilon\,.
    \end{equation*}
    Due to our choice of $\epsilon$, and the fact that both $b_r$ and $b_k$ are branch points in $T$, this implies $f(b_r) = f(b_k)$. Hence,
    \begin{equation*}
        |f(b) - f'(b')| = |f(b_r) - f'(b'_{\sigma(k)})| = |f(b_k) - f'(b'_{\sigma(k)})| \leq \delta\,,
    \end{equation*}
    completing the proof.
\end{proof}

Now we can prove Theorem \ref{thm:localeq}.
\begin{proof}[Proof of Theorem \ref{thm:localeq}.] For $x = i^t(l_i)$, define
\begin{equation*}
    \widetilde{\alpha}_i(x) = i^{t + \delta +  f(l_i) - f'(l'_{\sigma(i)})}(l'_{\sigma(i)})\,.
\end{equation*}
This definition makes sense since $t + \delta +  f(l_i) - f'(l'_{\sigma(i)}) \geq t \geq 0$. Moreover, $f'\widetilde{\alpha}_i(x) = f(x) + \delta$.
If $x = i^t(l_i) = i^s(l_j)$, then $\LCA(l_i,l_j) \preceq x$. Therefore,
\begin{equation*}
    f'\widetilde{\alpha}_i(x) = f(x) + \delta \geq f(\LCA(l_i,l_j)) + \delta \geq f'(\LCA(l'_{\sigma(i)}, l'_{\sigma(j)}))\,.
\end{equation*}
Since $\LCA(l'_{\sigma(i)}, l'_{\sigma(j)})$ and $\widetilde{\alpha}_i(x)$ are both ancestors of $l'_{\sigma(i)}$, this means $\LCA(l'_{\sigma(i)}, l'_{\sigma(j)}) \preceq \widetilde{\alpha}_i(x)$. Similarly, $\LCA(l'_{\sigma(i)}, l'_{\sigma(j)}) \preceq \widetilde{\alpha}_j(x)$. Since $f'\widetilde{\alpha}_i(x) = f'\widetilde{\alpha}_j(x) = f(x) + \delta$, this implies $\widetilde{\alpha}_i(x) = \widetilde{\alpha}_j(x)$. Therefore the maps $\widetilde{\alpha}_i$ together define a map $\widetilde{\alpha}: T \to T'$. Since $f'\widetilde\alpha(x) = f(x) + \delta$ and $\widetilde\alpha(x) \preceq \widetilde\alpha(y)$ whenever $x \preceq y$, we deduce that $\widetilde\alpha$ is continuous.

Due to the last statement of Lemma \ref{lem:plemma}, for $k \in E$ we may define $w(k)$ such that $\sigma(w(k))$ is the least index of a leaf descendant from $p'_k$. With this in mind, for $x = i^t(l'_k)$, we define
\begin{equation*}
    \widetilde{\beta}_k(x) = \begin{cases}
        i^{t + \delta + f'(l'_k) - f(l_i)}(l_i)& k = \sigma(i)\,,\\
        i^{t + \delta + f'(l'_k) - f(l_{w(k)})}(l_{w(k)})& k \in E\,.
    \end{cases}
\end{equation*}

If $k = \sigma(i)$, this is well defined as $t + \delta + f'(l'_k) - f(l_i) \geq t \geq 0$, and moreover we have $f\widetilde{\beta}_k(x) = t + \delta + f'(l'_k) = f'(x) + \delta$.

If $k \in E$, $\widetilde\beta_k(x)$ is well defined as
\begin{align*}
    t + \delta + f'(l'_k) - f(l_{w(k)}) &= t + \delta + f'(l'_{\sigma(w(k))}) - f'(l'_{\sigma(w(k))}) + f'(l'_k) - f(l_{w(k)})\\
    &\geq t + \delta + f'(l'_{\sigma(w(k))}) - f(l_{w(k)})\\
    &\geq t \geq 0\,,
\end{align*}
where in going from the first to the second line we have used that since $l'_{\sigma(w(k))}, l'_k\preceq p'_k$, we have $f'(l'_{\sigma(w(k))}) \leq f'(l'_k)$, by the definition of $w$. We also have $f\widetilde\beta_k(x) = t + \delta + f'(l'_{k}) = f'(x) + \delta$.

We now wish to show that the maps $\widetilde\beta_k$ combine to define a map $\widetilde\beta:T' \to T$, as we did with $\widetilde\alpha$. Let $x = i^t(l'_k) = i^s(l'_q)$. There are three cases to handle.

The first case is that $k = \sigma(i)$ and $q = \sigma(j)$. In this case $\LCA(l'_{\sigma(i)}, l'_{\sigma(j)})\preceq x$, so
\begin{equation*}
    f\widetilde\beta_k(x) = f'(x) + \delta \geq f'(\LCA(l'_{\sigma(i)}, l'_{\sigma(j)})) + \delta \geq f(\LCA(l_i,l_j))\,.
\end{equation*}
Since both $\widetilde\beta_k(x)$ and $\LCA(l_i,l_j)$ are ancestors of $l_i$, it follows that $\LCA(l_i,l_j) \preceq \widetilde \beta_k(x)$. Similarly, $\LCA(l_i,l_j) \preceq \widetilde \beta_q(x)$. Since $f\widetilde\beta_k(x) = f\widetilde\beta_q(x) = f'(x) + \delta$, we conclude that $\widetilde\beta_k(x) = \widetilde\beta_q(x)$.

The second case is that $k = \sigma(i)$ and $q \in E$. If $x \preceq p'_q$, then it must be the case that $f'(l'_{\sigma(i)}) > f'(l'_q)$. Therefore, using that $f'(p'_q) - f'(l'_q) \leq 2\delta \leq 2\epsilon$, we have
\begin{align*}
    f'(l'_q) < f'(l'_{\sigma(i)}) \leq f'(x) \leq f'(p'_q) \implies f'(x) - f'(l'_{\sigma(i)}) \leq 2\epsilon \implies f\beta(x) - f(l_i) < 4\epsilon\,,
\end{align*}
so that $l_i \preceq \beta(l'_{\sigma(i)}) \preceq \beta(x) \preceq i^{4\epsilon}(l_i)$. Since there are no branch points in $f^{-1}(f(l_i),f(l_i) + 4\epsilon]$, we have that all descendants of $\beta(x)$ are ancestors of $l_i$. In particular, $l_i \preceq \beta(l'_q)$. This implies $\beta(l'_{\sigma(i)}) \preceq \beta(l'_q)$, and so $f'(l'_q) \geq f'(l'_{\sigma(i)})$, a contradiction.

It follows that $p'_q \preceq x$, so that $x = i^u(l'_{\sigma(w(q))})$. We have that $u = s + f'(l'_q) - f'(l'_{\sigma(w(q))})$, so
\begin{align*}
    \widetilde\beta_{\sigma(w(q))}(x) &= i^{u + \delta + f'(l'_{\sigma(w(q))}) - f(l_{w(q)})}(l_{w(q)})\\
    & = i^{s + \delta + f'(l'_q) - f'(l'_{\sigma(w(q))}) + f'(l'_{\sigma(w(q))}) - f(l_{w(q)})}(l_{w(q)})\\
    &= i^{s + \delta + f'(l'_q) - f(l_{w(q)})}(l_{w(q)})\\
    & = \widetilde\beta_{q}(x)\,.
\end{align*}
Hence, by the first case, $\widetilde\beta_k(x) = \widetilde\beta_{\sigma(w(q))}(x) = \widetilde{\beta}_{q}(x)$.

The final case is that $k, q \in E$. In this case, if $l'_{\sigma(i)} \preceq x$ for some $1 \leq i \leq n$, then $x = i^u(l'_{\sigma(i)})$ and so, using the second case, we have $\widetilde\beta_{k}(x) = \widetilde\beta_{\sigma(i)}(x) = \widetilde\beta_{q}(x)$. Hence we may assume there is no $1 \leq i \leq n$ such that $l'_{\sigma(i)} \preceq x$. As such, $x \preceq p'_k$, so $l'_q \preceq x \preceq p'_k$\,. From this we deduce that $p'_k = p'_q$. Hence $w(k) = w(q)$. Using this and that $t + f'(l'_k) = s + f'(l'_q)$, we have
\begin{align*}
    \widetilde{\beta}_k(x) &= i^{t + \delta + f'(l'_k) - f(l_{w(k)})}(l_{w(k)})\\
    &= i^{s + \delta + f'(l'_q) - f(l_{w(q)})}(l_{w(q)})\\
    & = \widetilde\beta_q(x)\,.
\end{align*}

Hence the $\widetilde\beta_k$ together define a map $\widetilde\beta: T' \to T$. Since $f\widetilde\beta(x) = f'(x) + \delta$ and $\widetilde\beta(x) \preceq \widetilde\beta(y)$ whenever $x \preceq y$, $\widetilde\beta$ is continuous.

It only remains to verify the identities $\widetilde\alpha\widetilde\beta = i^{2\delta}$ and $\widetilde\beta\widetilde\alpha = i^{2\delta}$. If $x = i^t(l_i)$, then
\begin{align*}
    \widetilde\beta\widetilde\alpha(x) = \widetilde\beta i^{t + \delta +  f(l_i) - f'(l'_{\sigma(i)})}(l'_{\sigma(i)}) = i^{t + \delta +  f(l_i) - f'(l'_{\sigma(i)}) + \delta + f'(l'_{\sigma(i)}) - f(l_i)} (l_i) = i^{2\delta} i^t(l_i) = i^{2\delta}(x)\,.
\end{align*}

If $x \in T'$, first assume $x = i^t(l'_{\sigma(i)})$, then
\begin{align*}
    \widetilde\alpha\widetilde\beta(x) = \widetilde\alpha i^{t + \delta +  f'(l'_{\sigma(i)}) - f(l_{i})}(l_i) = i^{t + \delta +  f'(l'_{\sigma(i)}) - f(l_{i}) + \delta + f(l_i) - f'(l'_{\sigma(i)})} (l'_{\sigma(i)}) = i^{2\delta} i^t(l'_{\sigma(i)}) = i^{2\delta}(x)\,.
\end{align*}
Otherwise, $x = i^t(l'_j)$, for some $j \in E$. In this case we have $f'\widetilde\alpha\widetilde\beta(x) = t + 2\delta + f'(l'_j) = f'i^{2\delta}(x)$. Since $f'(p'_j) - f'(l'_j) \leq 2\delta$ and $l'_j$ is a descendant of $p'_j$, it follows that $l'_{\sigma(w(j))} \preceq p'_j \preceq i^{2\delta}(l'_j) \preceq i^{t+2\delta}(l'_j) = i^{2\delta}(x)$. Since $\widetilde\alpha\widetilde\beta(x)$ is also an ancestor of $l'_{\sigma(w(j))}$ with the same $f$-value as $i^{2\delta}(x)$, it follows that $\widetilde\alpha\widetilde\beta(x) = i^{2\delta}(x)$.

Hence $\widetilde\alpha$ and $\widetilde\beta$ define a $\delta$-interleaving between $(T,f)$ and $(T',f')$, establishing that
\begin{equation*}
    d_B\big((T,f),(T',f')\big) \geq d_I\big((T,f),(T',f')\big)\,.
\end{equation*}
Then Theorem \ref{thm:bottlelessthaninterleaving} implies that in fact
\begin{equation*}
    d_B\big((T,f),(T',f')\big) = d_I\big((T,f),(T',f')\big)\,,
\end{equation*}
and the proof is complete.
\end{proof}

\section{Proving the Conjecture}
\label{sec:intrinicint}

The goal of this section is to strengthen Theorem \ref{prop:intrinsicinterleaving} to the following.

\begin{theorem}
    \label{thm:conjecture}
    On $\MT$, $\widehat{d}_B = \widehat{d}_I = d_I$.
\end{theorem}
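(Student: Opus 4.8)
The plan is to prove $\widehat d_B\le\widehat d_I$ and $\widehat d_I\le\widehat d_B$ separately; combined with $\widehat d_I=d_I$ (Theorem~\ref{thm:dI}) this gives the statement. The first inequality is immediate, since intrinsic (pseudo)metrics are monotone in the underlying one: from $d_B\le d_I$ (Theorem~\ref{thm:bottlelessthaninterleaving}) we get $L_{d_B}(\gamma)\le L_{d_I}(\gamma)$ for every path $\gamma$, hence $\widehat d_B\le\widehat d_I=d_I$. All the content is in the reverse inequality $d_I\le\widehat d_B$, i.e. in showing
\[
L_{d_B}(\gamma)\ \ge\ d_I\big(\gamma(0),\gamma(1)\big)
\]
for every $\gamma\colon[0,1]\to\MT$ continuous in the topology of $\MT$ (which is the $d_I$-topology, by definition of $\MT$). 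It helps to note at the outset that, since $d_B$ on $\MT$ is by definition the pullback along the barcode map $\pi\colon\MT\to\mathcal B$, $(T,f)\mapsto B(T,f)$, we have $L_{d_B}(\gamma)=L_{d_B}(\pi\circ\gamma)$: the left-hand side sees only the shadow of $\gamma$ downstairs, while the right-hand side remembers the lift.

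First I would reduce to a local statement: \emph{every $(T,f)\in\MT$ has a $d_I$-neighbourhood $U$ such that every $d_I$-continuous $\sigma\colon[0,1]\to U$ satisfies $L_{d_B}(\sigma)\ge d_I(\sigma(0),\sigma(1))$.} Granting this, cover the $d_I$-compact arc $\gamma([0,1])$ by finitely many such $U$'s, take a partition $0=t_0<\dots<t_N=1$ finer than a Lebesgue number so that each $\gamma([t_{i-1},t_i])$ lies in one of them, and combine the local bound with additivity of $L_{d_B}$ over the partition and the triangle inequality for $d_I$:
\[
L_{d_B}(\gamma)=\sum_{i=1}^N L_{d_B}\big(\gamma|_{[t_{i-1},t_i]}\big)\ \ge\ \sum_{i=1}^N d_I\big(\gamma(t_{i-1}),\gamma(t_i)\big)\ \ge\ d_I\big(\gamma(0),\gamma(1)\big).
\]

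The heart of the argument is the local statement, which is where the branched-covering picture from the introduction enters. In a small $d_I$-ball $U$ about $(T,f)$, $\MT$ decomposes into finitely many \emph{combinatorial strata} — merge trees with a fixed isomorphism type of underlying tree and a fixed linear order on the heights of its leaves and branch points — glued along lower-dimensional degenerate strata where some of these heights coincide (together with, harmlessly, strata obtained by attaching arbitrarily short spikes). On the closure of each stratum $\tau$ I claim $\pi$ restricts to an \emph{isometry} onto its image in $(\mathcal B,d_B)$: on $\tau$ the births of $B(T,f)$ are exactly the leaf heights and the deaths exactly the branch-point heights, paired by the (stratum-constant) Elder rule, so the obvious bottleneck matching shows $d_B$ equals the $\ell^\infty$-distance of the height vectors; that $\ell^\infty$-distance is the cophenetic/LCA-matrix distance, which bounds $d_I$ from above by Lemma~\ref{lem:matrixupperbound}; and $d_B\le d_I$ by Theorem~\ref{thm:bottlelessthaninterleaving}. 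Hence $d_B=d_I$ on each stratum, and by continuity on its closure. Distinct strata can have overlapping $\pi$-images — this is precisely where $\pi$ branches, over degenerate barcodes — but a $d_I$-continuous path in $U$ must pass through the degenerate locus to move between strata, so it decomposes into sub-arcs each lying in the closure of a single stratum. On each sub-arc, $L_{d_B}=L_{d_B}\circ\pi\ge d_B(\text{endpoints})=d_I(\text{endpoints})$, and chaining these across the breakpoints by the triangle inequality for $d_I$ gives $L_{d_B}(\sigma)\ge d_I(\sigma(0),\sigma(1))$.

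I expect the main obstacle to be making this last decomposition rigorous. The stratification is only weakly locally finite — infinitely many spike strata accumulate at every point — and a continuous path can meet the degenerate locus in a topologically complicated closed set, so "sub-arcs each in one closed stratum" is not literally a finite partition. I would handle this by inducting on the number of leaves (equivalently on stratum dimension): near $(T,f)$ the degenerate locus is a finite union of strata of strictly smaller effective leaf-count, where the local statement holds by induction; off the degenerate locus the path lies in a single stratum and the stratum-wise isometry applies directly; and the two contributions are assembled by a countable-additivity-of-length argument together with continuity of $d_I$. Secondary points needing care: that $\gamma([0,1])$ lies in some $\MT_n$ (or that the spike strata cause no trouble when one works in $\MT$ directly); that $U$ must be taken small relative to the shortest bar of $B(T,f)$, with trees having very short bars first reduced — at $d_I$-cost half the bar length — to trees with fewer leaves; and the verification that on a stratum the Elder-rule matching really is optimal, so that $d_B$ there equals, rather than merely is bounded by, the $\ell^\infty$-height-distance.
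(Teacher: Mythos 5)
Your approach is at heart the same as the paper's: identify combinatorial strata on which the LCA-matrix/$\ell^\infty$-height distance sandwiches $d_B$ between $d_I$ (from below, via Theorem~\ref{thm:bottlelessthaninterleaving}) and the matrix bound of Lemma~\ref{lem:matrixupperbound}, conclude $d_B=d_I$ on each stratum closure, and then propagate along paths. The stratum-wise equality argument you sketch (matching lemma for the lower bound on $d_B$, cophenetic/LCA bound for the upper bound on $d_I$) is exactly what the paper's Proposition~\ref{prop:mtnhyp} carries out, with the strata being the sets $Y_i$ cut out by the comparison order on LCA-matrix entries.

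However, the two ``obstacles'' you flag at the end are genuine gaps, not secondary, and your plan does not resolve either of them. First, the issue that ``$\gamma([0,1])$ lies in some $\MT_n$'' is simply false in general: a $d_I$-continuous path can have trees with leaf count tending to infinity, as long as the extra branches shrink. Your parenthetical ``or that the spike strata cause no trouble'' waves this away; the paper handles it via Proposition~\ref{prop:nicepath}, postcomposing with $i^\epsilon$ to prune short branches and produce a nearby path valued in $\MT_m$ for finite $m$, at the cost of adding two short cap paths of controlled $L_{d_B}$. Second, your local-neighbourhood formulation with a branched-covering picture creates the difficulty you then worry about (infinitely many spike strata accumulating, a possibly complicated degenerate locus), and your proposed fix (induction on leaf count plus ``countable additivity of length'') is unverified. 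The paper sidesteps this entirely: rather than a local cover it uses a global finite closed cover $X_1,\dots,X_N$ of $\MT_n$, and then a general Proposition~\ref{prop:generalcondition} handles the decomposition of an arbitrary continuous path by a clean pigeonhole argument --- take $s_1=\sup\{s:\gamma(s)\in X_1\}$, which lands in $X_1$ by closedness, then repeat with the remaining $X_i$ --- producing a finite subdivision $(s_i)$ with consecutive pairs $\gamma(s_{i-1}),\gamma(s_i)$ in a common $X_i$, no induction on dimension needed. If you adopt the paper's two devices (the abstract finite-closed-cover proposition and the $i^\epsilon$-pruning) your argument closes; as written it has the right skeleton but leaves both load-bearing steps as acknowledged open points.
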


Using Theorem \ref{thm:localeq}, we can prove this statement using a general metric spaces argument. In fact, we can actually prove something a little stronger.

\begin{corollary}
    \label{cor:pathssame}
    For any continuous path $\gamma:[0,1] \to \MT$, we have that $L_{d_I}(\gamma) = L_{d_B}(\gamma)$.
\end{corollary}

\begin{proof}
    For numbers $0= t_0 \leq \ldots \leq t_l = 1$ and $0\leq s_0\leq \ldots \leq s_m\leq 1$, we say $(s_0,\ldots,s_m)$ is subordinate to $(t_0,\ldots,t_l)$ if for each $0\leq i\leq l$ there is an index $j$ such that $s_j =t_i$.

    We claim that given any continuous path $\gamma:[0,1] \to \MT$, and any $0= t_0 \leq \ldots \leq t_l = 1$, there is a tuple $(s_0,\ldots,s_m)$ subordinate to $(t_0,\ldots,t_l)$ such that
\begin{equation}
\label{eqn:subordinatedist}
    \sum_{i=1}^md_B\big(\gamma(s_{i-1}),\gamma(s_i)\big) = \sum_{i=1}^m d_I\big(\gamma(s_{i-1}),\gamma(s_i)\big)\,.
\end{equation}
The claim and the triangle inequality together imply that $L_{d_B}(\gamma) = L_{d_I}(\gamma)$ for any continuous path $\gamma:[0,1] \to \MT$. To prove the claim, it suffices to prove the claim in the case $l = 1$.

By Theorem \ref{thm:localeq}, every $x\in [0,1]$ has a neighborhood $U_x \subseteq [0,1]$ such that for all $x' \in U_x$, $d_B\big(\gamma(x), \gamma(x')\big) = d_I\big(\gamma(x),\gamma(x')\big)$. By a compactness argument we may find $0 = t_0 = r_0 < \ldots< r_k = t_1 = 1$ such that $U_{r_{i-1}} \cap U_{r_{i}}$ is nonempty. Pick $r'_i \in U_{r_{i-1}} \cap U_{r_{i}}$ for $1\leq i \leq k$. Set $m = 2k$. For $0 \leq i \leq m$, define
\begin{equation*}
    s_i = \ \begin{cases}
        r_j & i = 2j\\
        r'_j &i = 2j - 1\,.
    \end{cases}
\end{equation*}
Then $d_B\big(\gamma(s_{i-1}), \gamma(s_i)\big) = d_I\big(\gamma(s_{i-1}),\gamma(s_i)\big)$ for $1 \leq i \leq m$. It follows that
\begin{equation*}
    \sum_{i=1}^md_B\big(\gamma(s_{i-1}),\gamma(s_i)\big) = \sum_{i=1}^m d_I\big(\gamma(s_{i-1}),\gamma(s_i)\big)\,,
\end{equation*}
proving the claim and hence the corollary.
\end{proof}

Theorem \ref{thm:conjecture} follows immediately.

\section{Existence of Geodesics}
\label{sec:geodesics}
In this section we establish the existence of geodesics (of $d_B$ and $d_I$) in the metric space $\MT$.

\begin{theorem}
    \label{thm:geodesics}
    Suppose $(T,f), (T',f') \in \MT$, such that $(T,f)$ has $n$ leaves and $(T',f')$ has $m$ leaves. Then there exists a geodesic from $(T,f)$ to $(T',f')$ consisting of merge trees with at most $n + m$ leaves. In other words, there exists a continuous map $\gamma:[0,1] \to \MT_{n + m}$ satisfying
    \begin{itemize}
        \item $\gamma(0) = (T,f)$;
        \item $\gamma(1) = (T',f')$; and
        \item $L_{d_B}(\gamma) = L_{d_I}(\gamma)  = d_I\big((T,f),(T',f')\big)$.
    \end{itemize}
\end{theorem}

We remark that from Theorem \ref{thm:dI} (which we remind the reader comes from \cite{gasparovic2025intrinsic}), the $d_I\big((T,f),(T',f')\big)$ in the last bullet point could be replaced with $\widehat{d}_I\big((T,f),(T',f')\big)$. From Theorem \ref{thm:conjecture}, it may be replaced by $\widehat{d}_B\big((T,f),(T',f')\big)$. Hence this theorem establishes the existence of geodesics of both the interleaving and bottleneck distances.

Our method of proof of Theorem \ref{thm:geodesics} is largely unoriginal. The reader will see that the proof below relies heavily on results and methods established in \cite{gasparovic2025intrinsic}.

\begin{proof}
    Suppose $d_I\big((T,f), (T',f')\big) = \delta$. By Lemma \ref{lem:infint}, there exists a $\delta$-interleaving given by $\alpha:T\to T'$ and $\beta:T' \to T$. By the proof of \cite[Lemma 1]{touli2022fpt}, the map $\alpha$ is a $\delta$-good map from $T$ to $T'$. By the proof of Theorem 4.1 in \cite{gasparovic2025intrinsic} (see in particular Lemmas 4.2-3 therein), we may equip $(T,f)$ with a labeling $\pi$ and $(T',f')$ with a labeling $\pi'$ such that both labelings are defined on $[n + m]$ and
    \begin{equation*}
        d_I^L\big((T,f,\pi), (T',f',\pi')\big) = \delta\,.
    \end{equation*}
    By \cite[Corollary 3.2]{gasparovic2025intrinsic} and its proof, there exists a path $\gamma'$ from $(T,f, \pi)$ to $(T',f',\pi')$ in $\LMT_{n+m}$ such that $L_{d_I^L}(\gamma') = \delta$. We may forget labelings to attain a path $\gamma:[0,1] \to \MT$ from $(T,f)$ to $(T',f')$. From \cite[Theorem 4.1]{gasparovic2025intrinsic} it follows that $L_{d_I}(\gamma) \leq L_{d_I^L}(\gamma')$. As such,
    \begin{equation*}
        L_{d_I}(\gamma) \leq L_{d_I^L}(\gamma') = \delta\,.
    \end{equation*}
    Also, $L_{d_I}(\gamma) \geq d_I\big((T,f),(T',f')\big) = \delta$, so $L_{d_I}(\gamma) = \delta = d_I\big((T,f), (T',f')\big)$. This proves $\gamma$ is a geodesic of $d_I$. By Corollary \ref{cor:pathssame}, $L_{d_B}(\gamma) = L_{d_I}(\gamma) = \delta$, so $\gamma$ is simultaneously a geodesic of $d_B$. Since $\gamma'$ is a path through the space of labeled merge trees with $n+m$ labels, each merge tree $\gamma'(t)$ must have at most $n + m$ leaves for $t \in [0,1]$. It then follows that $\gamma$ has image in $\MT_{n+m}$.
\end{proof}

\section{Discussion}
\subsection{Efficient computation of merge tree interleaving}
The barcode $B(T,f)$ of a merge tree $(T,f)$ can be computed in $O\big(k \log(k)\big)$ time, where $k$ is the number of leaves and branch points in $(T,f)$ (see e.g. \cite{edelsbrunner2010computational, rolle2024stable}). The bottleneck distance between $B(T,f)$ and $B(T',f')$ can be computed in $O\big(n^{1.5}\log(n)\big)$, where $n$ is the combined number of intervals in the barcodes or, equivalently, the number of leaves in $(T,f)$ and $(T',f')$ \cite{kerber2017geometry}. Consequently the bottleneck distance between merge trees can be efficiently computed. By contrast, computing the interleaving distance between merge trees is NP-hard; in fact, it is NP-hard to approximate within a factor of $3$ \cite{agarwal2018computing,touli2022fpt}. Our proof shows that in certain cases, the computation of $d_I\big((T,f),(T',f')\big)$ can be done in polynomial time, namely if $(T,f)$ is sufficiently close to $(T',f')$ we may compute the bottleneck distance instead.

\subsection{Future work and conjectures}
There are several natural open questions that are closely related to the work undertaken here, and in the spirit of \cite{gasparovic2025intrinsic} we state them here for others to consider.

First, we have already shown that if $(T,f)$ has $n$ leaves and $(T',f')$ has $m$ leaves, then there exists a geodesic (with respect to either the interleaving or bottleneck distance) through merge trees with $n + m$ or fewer leaves. We conjecture that we can only do slightly better than this:

\begin{conjecture}
    \label{conj:optgeo}
    Let $(T,f)$ and $(T',f')$ be merge trees with $n$ and $m$ leaves respectively. Then there exists a geodesic between $(T,f)$ and $(T',f')$ through merge trees with $n + m - 1$ or fewer leaves. In other words, there exists a continuous $\gamma:[0,1] \to \MT_{n + m -1}$ such that
    \begin{itemize}
        \item $\gamma(0) = (T,f)$;
        \item $\gamma(1) = (T',f')$; and
        \item $L_{d_B}(\gamma) = L_{d_I}(\gamma) = d_{I}\big((T,f),(T',f')\big)$.
    \end{itemize}
    Moreover, for all $n,m \geq 0$, there exist merge trees $(T,f)$ and $(T',f')$ with $n$ and $m$ leaves respectively such that no geodesic from $(T,f)$ to $(T',f')$ meets only merge trees with $n + m - 2$ or fewer leaves.
\end{conjecture}

We remark that the $n + m - 1$ in the above conjecture surely cannot be decreased by any constant. This can be seen by letting $(T,f)$ be a merge tree with one leaf and $(T',f')$ be a merge tree with $n$ leaves.

One may also wonder how intrinsic distances vary when we restrict to paths in $\MT_n$. We may define $\widehat{(d_B)}_n$ to be the intrinsic distance arising from the restriction of $d_B$ to $\MT_n$, and may define $\widehat{(d_I)}_n$ similarly for each $n\geq 1$. Corollary \ref{cor:pathssame} immediately implies $\widehat{(d_B)}_n = \widehat{(d_I)}_n$. Since $\MT_n \subseteq \MT$, we have $\widehat{(d_I)}_n \geq \widehat{d}_I = d_I$. We conjecture:
\begin{conjecture}
    When $n >1$, there exist merge trees $(T,f), (T',f')\in \MT_n$ such that
    \begin{equation*}
        \widehat{(d_I)}_n \big((T,f),(T',f')\big) > d_I \big((T,f),(T',f')\big)\,.
    \end{equation*}
\end{conjecture}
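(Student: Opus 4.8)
The conjecture bundles three assertions: (i) $\widehat{(d_I)}_1 = d_I$; (ii) $\widehat{(d_I)}_2 = d_I$; and (iii) for every $n>2$ some pair in $\MT_n$ has $\widehat{(d_I)}_n > d_I$. The plan is to establish (i) and (iii) directly; the hard part, which I expect to be the main obstacle, is (ii) --- and in fact I believe that carrying out the natural attack on (ii) reveals it to be \emph{false}, with the counterexample for (iii) already living inside $\MT_2$. Assertion (i) is immediate: a merge tree with at most one leaf is determined up to isomorphism by the $f$-value of its minimum, $d_I$ between two such trees is the difference of these values, so $(\MT_1,d_I)$ is isometric to a subinterval of $\mathbb{R}$ and hence coincides with its own intrinsic metric.

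For (iii), and for probing (ii), I would use the following pair, which lies in $\MT_2\subseteq\MT_n$ for every $n\ge2$: let $(T,f)$ have leaves at heights $0$ and $1$ merging at height $2$, and $(T',f')$ have leaves at heights $0$ and $100$ merging at height $101$. First I would show $d_I\big((T,f),(T',f')\big)=\tfrac12$. The inequality $d_I\ge d_B$ gives a lower bound, and $d_B=\tfrac12$: the partial matching pairing the essential bars $[0,\infty)$ with each other and leaving the length-$1$ bars $[1,2)$ and $[100,101)$ unmatched has cost $\tfrac12$, and nothing cheaper is possible since matching a finite bar to an essential bar costs $\infty$. For the reverse inequality I would write down an explicit $\tfrac12$-interleaving $(\alpha,\beta)$: $\alpha$ raises every height of $T$ by $\tfrac12$, which carries the two branches and the spine of $T$ into the long branch of $T'$ and then on up its root ray, and $\beta$ is the mirror image; since $i^{1}$ absorbs a branch of length $1$, one checks $\beta\alpha = i^{1} = \alpha\beta$ (invoking Lemma \ref{lem:infint} so the value is attained).

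The heart of the matter is then the bound $\widehat{(d_I)}_2\big((T,f),(T',f')\big)\ge1$. Let $\gamma$ be any $d_I$-continuous path in $\MT_2$ from $(T,f)$ to $(T',f')$, and split on whether $\gamma$ meets $\MT_1$. If $\gamma(s^\ast)=(a)\in\MT_1$, then by additivity of length and Theorem \ref{thm:bottlelessthaninterleaving}, $L_{d_I}(\gamma)\ge d_B\big((T,f),(a)\big)+d_B\big((a),(T',f')\big)\ge\tfrac12+\tfrac12=1$, because a length-$1$ bar of a two-leaf endpoint cannot be matched into a one-bar barcode and so forces cost $\ge\tfrac12$ whatever $a$ is. If $\gamma$ avoids $\MT_1$, then every $\gamma(s)$ is a genuine two-leaf tree; its finite-bar length is $2$-Lipschitz in $d_I$ and strictly positive, hence bounded below by some $m>0$ on $[0,1]$, so on a partition fine enough that $d_I(\gamma(s_{i-1}),\gamma(s_i))<m/2$ the cost-minimizing matching of consecutive barcodes must pair the finite bars, whence $d_B(\gamma(s_{i-1}),\gamma(s_i))\ge|r(s_i)-r(s_{i-1})|$ for $r(s)$ the merge height; summing and using the triangle inequality, $L_{d_I}(\gamma)\ge L_{d_B}(\gamma)\ge|r(1)-r(0)|=99$. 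Either way $L_{d_I}(\gamma)\ge1$, and the concatenation ``shrink the branch of $(T,f)$ to a point, then regrow the branch of $(T',f')$'' --- each leg of $d_I$-length $\tfrac12$ by Lemma \ref{lem:matrixupperbound} --- realizes length exactly $1$. So $\widehat{(d_I)}_2\big((T,f),(T',f')\big)=1$. This proves (iii) (verbatim for every $n\ge2$, stratifying $\MT_n$ by number of leaves in place of $\MT_1$), but it contradicts (ii), since $1>\tfrac12=d_I\big((T,f),(T',f')\big)$.

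The conceptual obstruction that sinks (ii) is exactly the behaviour the conjecture's ``$n>2$'' clause foresees, arriving one leaf sooner: a length-minimizing $d_I$-path between two two-leaf trees can be forced to split a branch, so the geodesic realizing $d_I\big((T,f),(T',f')\big)=\tfrac12$ --- simultaneously shrinking the branch of $T$ while growing that of $T'$ --- is a three-leaf tree at every interior time, and $\MT_2$ is not geodesically convex in $(\MT,d_I)$. Hence I would propose the corrected statement $\widehat{(d_I)}_1=\widehat{d}_I=d_I$ together with ``for \emph{every} $n\ge2$, $\widehat{(d_I)}_n\big((T,f),(T',f')\big)>d_I\big((T,f),(T',f')\big)$ for some pair'', and I see no route to proving (ii) as literally stated, the computation above ruling it out.
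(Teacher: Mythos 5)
This statement is one of the two conjectures the paper explicitly leaves open in its ``Future work'' section; the paper gives no proof, so there is no argument of the authors' to compare yours against. What you have written is, rather, a purported partial disproof, and I think the heart of it --- the $n=2$ counterexample --- is correct. With $(T,f)$ having leaves at $0,1$ merging at $2$ and $(T',f')$ having leaves at $0,100$ merging at $101$, your $\tfrac12$-interleaving (folding both branches of each tree onto the long branch of the other) is valid, and together with $d_I\ge d_B=\tfrac12$ it gives $d_I(T,T')=\tfrac12$. The dichotomy is also sound: if $\gamma$ meets $\MT_1$ then $L_{d_I}(\gamma)\ge d_B\big(T,\gamma(s^\ast)\big)+d_B\big(\gamma(s^\ast),T'\big)\ge\tfrac12+\tfrac12$, while if $\gamma$ avoids $\MT_1$ the finite-bar length is a $2$-Lipschitz function of $d_I$, positive on the compact image, hence bounded below, and on a sufficiently fine partition the optimal matching must pair finite bars, giving $L_{d_I}(\gamma)\ge L_{d_B}(\gamma)\ge|r(1)-r(0)|=99$. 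Either way $\widehat{(d_I)}_2(T,T')\ge 1>\tfrac12=d_I(T,T')$, which contradicts the $n=2$ clause as stated. This is a genuine and valuable observation and you should communicate it to the authors.

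Two gaps do remain. First, your claim that the lower bound of $1$ is \emph{attained} needs the shrink/regrow path to be taken symmetrically (pull the branch point down and the leaf up towards their midpoint simultaneously), which is what Lemma~\ref{lem:matrixupperbound} bounds by $\tfrac12|t-s|$ per increment; the naive path that only raises the leaf has $L_{d_B}=1$ per leg. You should make the path explicit, though for the refutation the lower bound alone suffices. Second, and more seriously, the parenthetical that part (iii) follows ``verbatim for every $n\ge2$, stratifying $\MT_n$ by number of leaves in place of $\MT_1$'' is not a proof and, as written, is misleading: for the very same pair, $\widehat{(d_I)}_3(T,T')=d_I(T,T')=\tfrac12$, because the bottleneck geodesic (simultaneously shrinking $[1,2)$ while growing $[100,101)$) is realized by a path of three-leaf trees that lies inside $\MT_3$. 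So $\MT_n$ for $n\ge 3$ requires a genuinely new example --- one whose bottleneck geodesic forces at least $n+1$ leaves --- together with a stratified version of your $\MT_1$-hitting dichotomy that handles every way a path in $\MT_n$ can drop to fewer leaves. Until that is supplied, you have refuted the $n=2$ clause but not established the $n>2$ clause; the honest statement of what you have proved is that $\widehat{(d_I)}_1=d_I$ and $\widehat{(d_I)}_2\neq d_I$.
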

This conjecture is evidently closely related to Conjecture \ref{conj:optgeo}.

We also believe a similar result to our main theorem holds for the $p$-presentation distances on merge trees, defined in \cite{cardona2022universal}, and the $p$-bottleneck distances as defined there as well.

\section*{Acknowledgments}
We thank Jacob Leygonie for conversations which led us to our proofs of Lemmas \ref{lem:infint} and \ref{lem:intmetric}. We thank Riley Decker, Tung Lam, and H{\aa}vard Bjerkevik for conversations which helped us identify a mistake in a previous version of this manuscript, and the former two for further comments on our revision. We thank Vuka\v{s}in Stojisavljevi\'{c} and Lukas Waas for conversations which helped us develop Section \ref{sec:geodesics} and Conjecture \ref{conj:optgeo}. We thank Steve Oudot and Justin Curry for other helpful conversations. DB was supported by NSF RTG-2136090. GG was supported by EPSRC Centre to Centre Research Collaboration grant EP/Z531224/1 and NSF MSPRF-2202895.

\bibliographystyle{alpha}
\bibliography{refs}

\end{document}